\newtheorem{theorem}{Theorem}[section]
\newtheorem*{thm}{Theorem}
\newtheorem{lemma}[theorem]{Lemma}
\newtheorem{corollary}[theorem]{Corollary}
\newtheorem{proposition}[theorem]{Proposition}
\newtheorem{conjecture}[theorem]{Conjecture}
\newtheorem{question}[theorem]{Question}
\theoremstyle{definition}
\newtheorem{remark}[theorem]{Remark}
\newtheorem{definition}[theorem]{Definition}
\newtheorem{example}[theorem]{Example}
\numberwithin{equation}{section}
\def\ZZ{{\mathbb Z}}
\def\NN{{\mathbb N}}
\def\KK{{\mathbb K}}
\def\sdepth{\operatorname{sdepth}}
\def\depth{\operatorname{depth}}
\DeclareMathOperator{\lcm}{lcm}
\DeclareMathOperator{\spdim}{spdim}
\DeclareMathOperator{\pdim}{pdim}
\newcommand{\set}[1]{\{#1\}}
\newcommand{\with}{\,:\,}
\newcommand{\subsetIJ}{\subsetneq}
\newcommand{\defa}{:=}
\newcommand{\ord}[2]{\mathrm{\mathop{ord}}_{#1}(#2)}
\newcommand{\qq}[1]{``#1''}
\newcommand{\gen}[1]{{#1}_\varepsilon}
\newcommand{\ve}{\varepsilon}
\newcommand{\xf}{\mathbf{x}}
\newcommand{\slat}{L}
\newcommand{\lat}{\overline{L}}
\renewcommand\>{\rangle}
\newcommand\<{\langle}
\begin{document}

\title{Stanley depth and the lcm-lattice}

\author{Bogdan Ichim}

\address{Simion Stoilow Institute of Mathematics of the Romanian Academy, Research Unit 5, C.P. 1-764,
014700 Bucharest, Romania} \email{bogdan.ichim@imar.ro}

\author{Lukas Katth\"an}

\address{Goethe-Universit\"at Frankfurt, FB Informatik und Mathematik, 60054 Frankfurt am Main, Germany}
\email{katthaen@math.uni-frankfurt.de}

\author{Julio Jos\'e Moyano-Fern\'andez}

\address{Universitat Jaume I, Campus de Riu Sec, Departamento de Matem\'aticas \& Institut Universitari de Matem\`atiques i Aplicacions de Castell\'o, 12071
Caste\-ll\'on de la Plana, Spain} \email{moyano@uji.es}

\subjclass[2010]{Primary: 05E40}

\keywords{Monomial ideal; lcm-lattice; Stanley depth; Stanley decomposition.}
\thanks{The first author was partially supported  by project  PN-II-RU-TE-2012-3-0161, granted by the Romanian National Authority for Scientific Research,
CNCS - UEFISCDI. The second author was partially supported by
the German Research Council DFG-GRK~1916. The third author was partially supported by the Spanish Government Ministerio de Econom\'ia y Competitividad (MINECO), grants MTM2012-36917-C03-03 and MTM2015-65764-C3-2-P, as well as by Universitat Jaume I, grant P1-1B2015-02.}

\begin{abstract}
In this paper we show that the Stanley depth, as well as the usual depth, are essentially determined by the lcm-lattice.
More precisely, we show that for quotients $I/J$ of monomial ideals $J\subset I$, both invariants behave monotonic with respect to certain maps defined on their lcm-lattice.
This allows simple and uniform proofs of many new and known results on the Stanley depth.
In particular, we obtain a generalization of our result on polarization presented in \cite{IKM}.
We also obtain a useful description of the class of all monomial ideals with a given lcm-lattice, which is independent from our applications to the Stanley depth.
\end{abstract}

\maketitle

\section{Introduction}

Let $\KK$ be a field, $S$ a $\NN^n$-graded $\KK$-algebra and $M$ a finitely generated $\ZZ^n$-graded $S$-module.
The \emph{Stanley depth} of $M$, denoted $\sdepth_S M$, is a combinatorial invariant of $M$ which was introduced by Apel in \cite{A1} and has attracted the attention of many researchers \cite{ HP,HVZ, BHKTY, OY, counterexample}.
We refer the reader to the survey of Herzog \cite{H} for an introduction to this subject.

One line of research on the Stanley depth was motivated by a conjecture of Stanley from 1982 \cite[Conjecture 5.1]{St},
which states that $\depth_S M \leq \sdepth_S M$, see also \cite[Remark 5.2]{G} and \cite[p.~149]{stanley79}.
In the following, we refer to this inequality as \emph{Stanley's inequality}.
Thus, a mayor aim of this study is to establish relations between the depth and the Stanley depth.
At a first glance, one might not expect any deep connection between them, at these invariants seem to be very different in nature.
On the one hand, we have the depth, which is an algebraic invariant (homological in nature), and on the other hand we have the Stanley depth, which is a purely combinatorial invariant.
Nevertheless, several parallel results for the depth and the Stanley depth have been found, see for example \cite{R2, Ci, HJZ, Is, SF}.
A counterexample to the original Stanley conjecture was recently given by Duval, Goeckner, Klivans and Martin in \cite{counterexample}. However, there still seems to be a deep and interesting connection between these two invariants, which is yet to be fully understood.

The Stanley depth is defined in terms of certain combinatorial decompositions of the module $M$, which are called Stanley decompositions.
It is also worth mentioning that these Stanley decompositions have a separate life in applied mathematics.
Sturmfels and White \cite{Sturmfels1991} have shown that Stanley decompositions may be used to describe finitely generated graded algebras, for example rings of invariants under some group action.
Recently this has found applications in the normal form theory for systems of differential equations (see Murdock \cite{Murdock2002}, Murdock and Sanders \cite{Murdock2007}, Sanders \cite{Sanders2007}).

Most of the research on the Stanley depth concentrates on the particular case of a module of the form $I/J$ for two monomial ideals $J \subsetIJ I$ in
the polynomial ring $S=\KK[X_1, \ldots , X_n]$.
In this paper we will also work in this setting.
For the sake of simplicity, we restrict this introduction to the case of modules of the form $S/I$ (most of the results are later proven in a more general setup).

The lcm-lattice $L_I$ of an ideal $I \subset S$ is the lattice of all least common multiples of subsets of the (minimal) generators of $I$, ordered by divisibility, see Gasharov et al. \cite{GPW}. It is a finite atomistic lattice that is known to encode a lot of information about $I$.
In particular, it encodes the structure of the minimal free resolution of $S/I$ over $I$ and thus determines the Betti numbers and the projective dimension of $S/I$ \cite[Theorem 3.3]{GPW}.
More precisely, what is shown in \cite{GPW} is the following: Let $I \subset S$ and $I' \subset S'$ be two monomial ideals.
Then, given a free resolution of $S/I$ and a surjective join-preserving map $L_I \rightarrow L_{I'}$ which is bijective on the atoms, one can construct a free resolution of $S'/I'$
by a certain relabeling procedure. In particular, the projective dimension of $S'/I'$ is bounded above by the projective dimension of $S/I$.
In this paper we obtain the analogous statement for the Stanley depth.
\begin{thm}[Corollary of \Cref{thm:lcmmap}]
	Let $I \subset S$ and $I' \subset S'$ be two proper monomial ideals in two polynomial rings in $n$ resp. $n'$ variables.
	If there exists a surjective join-preserving map $\delta: L_I \rightarrow L_{I'}$, such that $\delta^{-1}(\hat{0}) = \set{\hat{0}}$, then
	\[ n - \sdepth_S S/I \geq n' - \sdepth_{S'} S'/I'. \]
\end{thm}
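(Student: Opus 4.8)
The statement is equivalent to $\sdepth_{S'} S'/I' \ge n' - n + \sdepth_S S/I$, so the problem is to transport an optimal Stanley decomposition of $S/I$ \emph{forward} along $\phi$ onto the \qq{simpler} side $L_{I'}$, keeping control of the change in Stanley depth; the direction of the inequality reflects the fact that $L_{I'}$ is the image, not the source. This is the Stanley-depth counterpart of the Gasharov--Peeva--Welker relabeling of free resolutions, with one extra difficulty: a Stanley decomposition of $S/I$ need not be combinatorially supported on $L_I$, so some structure theory has to come first.

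\emph{Step 1 (reduction to a canonical model).} To each finite atomistic lattice $L$ I would attach a canonical squarefree monomial ideal $I_L \subset S_L$, where $S_L$ has one variable $Y_q$ for every meet-irreducible element $q \ne \hat 1$ of $L$ and $g_a = \prod_{q \not\ge a} Y_q$ is the generator corresponding to the atom $a$; one checks $L_{I_L} \cong L$. Using a short list of moves on monomial ideals with a fixed lcm-lattice --- adjoining or deleting a variable that does not occur, polarization and depolarization in the sense of \cite{IKM} (both preserve $L_I$, and by that paper preserve $n - \sdepth$), and rescaling a variable's exponents without altering any relevant maximum --- I would prove that every $I \subset S = \KK[X_1, \dots, X_n]$ with $L_I \cong L$ satisfies $n - \sdepth_S S/I = N_L - \sdepth_{S_L} S_L/I_L$, where $N_L$ is the number of variables of $S_L$. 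In particular $n - \sdepth_S S/I$ depends only on $L_I$, so it remains to show that a surjective join-preserving map $L \to L'$ forces $N_L - \sdepth S_L/I_L \ge N_{L'} - \sdepth S_{L'}/I_{L'}$; by Herzog--Vladoiu this is a purely combinatorial statement about the Stanley--Reisner complexes $\Delta_L, \Delta_{L'}$ of the canonical models, namely about the minimum, over interval partitions of their face posets, of the largest \emph{codimension} (number of variables minus dimension) of a Stanley space.

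\emph{Steps 2--3 (elementary collapses and the transfer).} Every surjective join-preserving map of finite atomistic lattices is a composition of elementary ones --- each removing a single meet-irreducible together with its upper cover, or a single join-irreducible together with its lower cover --- all of whose targets are again atomistic; so it suffices to treat one elementary $\phi$. On the canonical model each such move is explicit: $I_{L'}$ is obtained from $I_L$ by deleting one variable $Y_{q_0}$ from the generators that contain it, by deleting one generator, or by identifying two variables, so that $\Delta_{L'}$ is a link, a deletion, or a vertex restriction of $\Delta_L$ and $N_{L'} = N_L$ or $N_L - 1$. I would then establish the needed inequality by transferring an optimal interval partition of the face poset of $\Delta_L$ to one of $\Delta_{L'}$ in which no Stanley space gains more than one unit of codimension; this is essentially a monotonicity statement of the form \qq{passing to such a subcomplex lowers the Stanley depth by at most one}, proved by induction on the number of elements of $L$.

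The heart of the proof --- and where I expect essentially all the real work --- is this last transfer, together with the rescaling and (de)polarization moves underlying Step 1. The operations relating $\Delta_L$ to $\Delta_{L'}$ are not graded and can glue distinct monomials together, so an interval partition of the face poset of $\Delta_L$ does not restrict to one of $\Delta_{L'}$ automatically: one must choose the partition of $\Delta_L$ compatibly with the fibres of the collapse and check by hand that the images still tile $\Delta_{L'}$ within the codimension bound. One also has to verify that the factorization of $\phi$ in Step 2 can be arranged with atomistic intermediate lattices. Once this bookkeeping is in place, iterating along the factorization and feeding the result into Step 1 yields $n - \sdepth_S S/I \ge n' - \sdepth_{S'} S'/I'$.
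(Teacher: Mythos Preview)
Your Step~1 has a genuine gap. After polarizing, two squarefree ideals with the same lcm-lattice can still differ: by \Cref{cor:squarefree} and \Cref{thm:lcmgeneral} they are determined by the weights $w_I(m)$, which are squarefree and pairwise coprime, but the \emph{degrees} $\deg w_I(m)$ may vary. Your list of moves does not bridge this. \qq{Adjoining or deleting a variable that does not occur} is vacuous here; \qq{rescaling a variable's exponents} is vacuous for squarefree ideals, since every exponent is $0$ or $1$; and (de)polarization does not change the multiset of weight-degrees. Concretely, $(XY,XZ)\subset\KK[X,Y,Z]$ and $(Y,Z)\subset\KK[Y,Z]$ have isomorphic lcm-lattices, yet none of your moves connects them --- you must set $X\mapsto 1$, and then separately argue that $\spdim$ does not drop. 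The move you actually need is: add (or remove) a single new variable to $w_I(m)$ at a chosen element $m\in\lat_I$, and show this leaves $\spdim$ unchanged. That is exactly the content of \Cref{lemma:goingup}, and its proof uses the \qq{maps changing the Stanley depth} framework of \cite{IKM} (\Cref{def:1}, \Cref{prop:sdep}, \Cref{lem:interval}); it is the technical heart of the whole argument, not a preliminary normalization.

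The paper's route avoids your circular ordering altogether. Rather than first proving invariance under isomorphism of $L_I$ and then treating surjections, it works directly with the given surjection $\phi$: polarize both sides; use \Cref{lemma:goingup} repeatedly to inflate the weights on the source so that $\deg w_G(\pinv{\phi}(m'))\ge\deg w_{G'}(m')$ for all $m'$; then \Cref{lem:pullback} exhibits $I'$ as the image of the inflated $I$ under a map sending some variables to $1$; finally \Cref{lemma:localize} gives the inequality. The bijective case (your Step~1) then drops out as a corollary. Your factorization into elementary collapses (Steps~2--3) is closer in spirit to \Cref{lem:homom} and \Cref{lem:factor}, which the paper uses only in applications; note also that $L/\!\sim_a$ need not be atomistic when $a$ is an atom, so your claim that all intermediate lattices stay atomistic requires more care than stated.
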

By $\hat{0}$ we denote the minimal elements of the lattices.
In view of this result, we define the \emph{Stanley projective dimension}, $\spdim_S M$, analogously to the Stanley depth (cf. Definition \ref{def:spdim}).
In particular, it easily follows that $\spdim_S S/I = n - \sdepth_S S/I$.

\begin{example}\label{ex:main}
	For $S = \KK[x,y]$ and $S' = \KK[x,y,z,v]$ consider the two ideals
	\begin{align*}
	I &:= \<x^3,x^2y,xy^2,y^3\> \subset S \ \ \text{and}\\
	I' &:= \<yzv,xzv,xy^2v,x^2y^2z\>\subset S'.
	\end{align*}
	Their lcm-lattices are depicted in \Cref{fig:exInt}. We define a map $\delta: L_{I} \to L_{I'}$ by setting $\delta(x^2y^2) = xy^2zv$, $\delta(xy^3) = \delta(x^2y^3) = x^2y^2zv$, and every other monomial in $L_{I}$ is mapped to the monomial in $L_{I'}$ which is at the same place in \Cref{fig:exInt}.
	
	This map is join-preserving and surjective, so \Cref{thm:lcmmap} applies.
	It is clear that $\sdepth_S S / I = 0$ and so $\spdim_S S/I = 2 - \sdepth_S S / I = 2$.
	It follows that $\spdim_{S'} S' / I'= 4 - \sdepth_{S'} S' / I' \leq 2$, or equivalently that
	$\sdepth_{S'} S' / I' \geq 2$.
	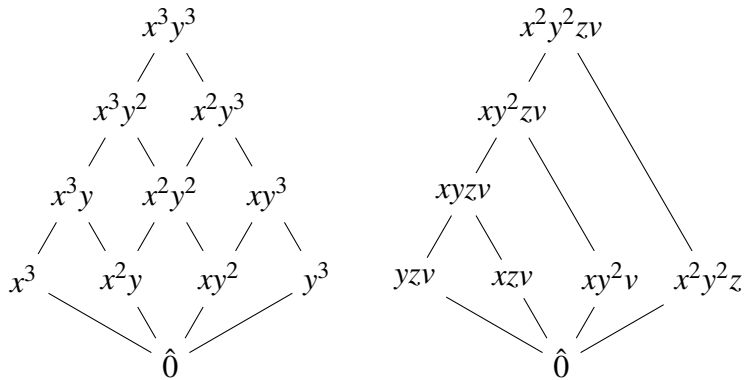
\begin{figure}[h]
	\centering
	\begin{tikzpicture}[scale=1.3,xscale=1.0]
	
	\begin{scope}[]
	\path (0,0)
		node (u1) at +(0,0) {$x^3$}
		node (u2) at +(1,0) {$x^2y$}
		node (u3) at +(2,0) {$xy^2$}
		node (u4) at +(3,0) {$y^3$};
	\path (0,0) (60:1)
		node (m1) at +(0,0) {$x^3y$}
		node (m2) at +(1,0) {$x^2y^2$}
		node (m3) at +(2,0) {$xy^3$};
	\path (0,0) (60:2)
		node (mm1) at +(0,0) {$x^3y^2$}
		node (mm2) at +(1,0) {$x^2y^3$};
	\path (0,0) (60:3)
		node (t) at +(0,0) {$x^3y^3$};
	\path (1,0)
		node (bot) at +(-60:1) {$\hat{0}$};
	
	\draw (u1)-- (m1) -- (mm1) -- (t);
	\draw (u2)-- (m2) -- (mm2);
	\draw (u3)-- (m3);
	\draw (u4)-- (m3) -- (mm2) -- (t);
	\draw (u3)-- (m2) -- (mm1);
	\draw (u2)-- (m1);
	
	\draw (u1) -- (bot) -- (u2);
	\draw (u3) -- (bot) -- (u4);
	\end{scope}
	
	\begin{scope}[xshift=4cm]
		\path (0,0)
		node (u1) at +(0,0) {$yzv$}
		node (u2) at +(1,0) {$xzv$}
		node (u3) at +(2,0) {$xy^2v$}
		node (u4) at +(3,0) {$x^2y^2z$}
		node (l1) at +(60:1) {$xyzv$}
		node (l2) at +(60:2) {$xy^2zv$}
		node (l3) at +(60:3) {$x^2y^2zv$};
		\path (1,0)
		node (bot) at +(-60:1) {$\hat{0}$};
		
		\draw (u1)-- (l1) -- (l2) -- (l3);
		\draw (u2)-- (l1);
		\draw (u3)-- (l2);
		\draw (u4)-- (l3);
		
		\draw (u1) -- (bot) -- (u2);
		\draw (u3) -- (bot) -- (u4);
	\end{scope}
	
	\end{tikzpicture}
	\caption{The lcm-lattices of the two monomial ideals from \Cref{ex:main}}
	\label{fig:exInt}
	\end{figure}
\end{example}

Theorem \ref{thm:lcmmap} has a number of important consequences.
First of all, it shows that two ideals with isomorphic lcm-lattices have the same Stanley projective dimension.
Thus, this invariant is determined by the isomorphism type of the lcm-lattice.
In particular, the lcm-lattice of an ideal is invariant under polarization. Hence Theorem \ref{thm:lcmmap} generalizes the main result of \cite{IKM}, where we showed that the Stanley projective dimension is invariant under polarization.

Next, we present a simple and uniform proof for upper bounds on the Stanley projective dimension (i.e.~lower bounds on the Stanley depth) in terms of the number of generators in Proposition \ref{prop:boundI}.
We also characterize the extremal case and prove that Stanley's inequality holds for ideals with $\pdim_S S/I = k - 1$, where $k$ is the number of generators of $I$.
As another application we study generic deformations in Proposition \ref{prop:gen} and the forming of colon ideals in Proposition \ref{prop:colon}. The latter allows us to give in Corollary \ref{cor:ass} a uniform proof that both depth and Stanley depth are bounded by the dimensions of the associated prime ideals.
Moreover, in Proposition \ref{prop:singledegree} we show that for studying the Stanley depth one may always assume that the ideal under consideration is generated in a single degree.

We further identify some operations on ideals, e.g. passing to the radical, that yield surjective join-preserving maps on the lcm-lattice, so we obtain inequalities for the Stanley projective dimension in these cases.
As all our proofs rest on Theorem \ref{thm:lcmmap} and we showcase an analogous result for the usual projective dimension (Theorem \ref{thm:lcmdepth}), we obtain the same bounds as for the usual projective dimension. While these results are well-known, it is relevant that we obtain \emph{uniform} proofs for both depth and Stanley depth, thus explaining the observed parallel behavior.

In the way of studying the relation of ideals to their lcm-lattices, we also get a result of independent interest. In Theorem \ref{thm:lcmgeneral} we give a complete description of the class of all monomial ideals with a given lcm-lattice.
This result also allows the easy construction of monomial ideals with a prescribed lcm-lattice, which we consider very useful for the study of examples.
Theorem \ref{thm:lcmgeneral} extends results obtained in \cite{M} to the (more general) case of not necessarily atomistic lattices, which is needed for our applications to both depth and Stanley depth.

Finally, the fact that both the projective dimension and the Stanley projective dimension are determined by the lcm-lattice allows us further to formulate open questions about the depth and the Stanley depth completely in terms of finite lattices.
So one can try to apply notions and techniques from this field to approach these questions.
In the last section we indicate some of these ideas.
In particular, this enables us to reduce the study of \emph{infinitely} many monomial ideals to \emph{finitely} many \emph{finite} lattices.
This paves the way to computations.
In several computational experiments, we have classified all lcm-lattices of ideals $I$ with up to five generators and found that
the projective dimension and the Stanley projective dimension of $S/I$ coincide for these lattices; this is presented in \cite{IKM3}.

In another follow-up paper \cite{K}, the second author applied Theorem \ref{thm:lcmmap} to show that many questions about the Stanley depth can be reduced to a very special class of ideals.
In particular, Stanley's inequality holds for ideals with up to seven generators and for quotients of the polynomial ring by ideals with up to six generators.

\section{Preliminaries}\label{sec:pre}

\subsection{Finite lattices and semilattices}
Let us recall some definitions and facts about finite lattices and semilattices.
We refer the reader to \cite{DP} for more background information.
A \emph{join-semilattice} $L$ is a partially ordered set $(L,\leq)$ such that, for any $P,Q \in L$, there is a unique least upper bound $P\vee Q$ called the join of $P$ and $Q$.
A \emph{lattice} is a join-semilattice $L$ with the additional property that for any $P,Q \in L$, there is a unique greatest lower bound $P \wedge Q$ called the meet of $P$ and $Q$.

Every finite join-semilattice has a unique maximal element $\hat{1}$.
Moreover, a finite join-semilattice is a lattice if and only if it has a minimal element.
So we can associate to every finite join-semilattice $\slat$ a canonical lattice $\lat := \slat \cup \set{\hat{0}}$ by adjoining a minimal element $\hat{0}$.
All lattices and semilattices in the sequel will be assumed to be finite.

We say that an element $a \in L$ \emph{covers} another element $b \in L$, if $b < a$ and there exists no other element $c \in L$, such that $b < c < a$.
An element is called an \emph{atom} if it covers the minimal element $\hat{0}$ in $\lat$. Equivalently, the atoms are the minimal elements of $\slat$ (in the sense that there are no smaller elements).
We call $L$ \emph{atomistic}, if every element can be written as a join of atoms.

A \emph{meet-irreducible} element is an element which is covered by exactly one other element. This terminology is justified by noting that $a$ is meet-irreducible if and only if $a=b \wedge c$ implies $a=b$ or $a=c$ for $b,c \in \lat$ where the meet is taken in $\slat$.
A \emph{join-preserving map} $\delta: L \to L'$ is a map with $\delta(a \vee b)=\delta(a) \vee \delta(b)$ for all $a,b \in L$. Note that every join-preserving map preserves the order.

\subsection{The lcm-lattice and lcm-closed subsets}
\newcommand{\Mon}[1]{\mathop{\mathrm{Mon}}(#1)}
\newcommand{\LCMs}[1]{L(#1)}
\newcommand{\LCMl}[1]{{#1}_{\hat{0}}}
\renewcommand{\LCMl}[1]{\overline{#1}}

Let $S = \KK[X_1,\dotsc, X_n]$ be a polynomial ring.
A \emph{monomial} $m \in S$ is a product of powers of variables of $S$. In particular, $1_\KK$ is a monomial, but $0_\KK$ is not.
We write $\Mon{S}$ for the set ot monomials of $S$. Note that $\Mon{S}$ forms a $\KK$-basis of $S$.

Recall from \cite{GPW} that the \emph{lcm-lattice} $L_I$ of a monomial ideal $I \subset S$ is the set of all least common multiples of subsets of the minimal set of generators of $I$, together with a minimal element $\hat{0}$ which is usually identified with $1_\KK$ and regarded as the lcm of the empty set.
For our scope, we need to modify this notion in several ways.
First, we need to consider non-minimal generating sets, second we need a reasonable replacement of the lcm-lattice for a pair $J \subsetneq I$ of ideals, and finally we want that our modified definition yields isomorphic lattices for all principal ideals, including the unit ideal.
To this end we give the following definition.
\begin{definition}
	We call a finite set $G \subset \Mon{S}$ of monomials \emph{lcm-closed}, if the least common multiple (lcm) of every non-empty subset of $G$ is also contained in $G$.
	
	The \emph{lcm-closure} of a finite set $G \subset \Mon{S}$, denoted by $\LCMs{G} \subset \Mon{S}$, is defined as the set of all monomials that can be obtained as the least common multiple (lcm) of some non-empty subset of $G$
\end{definition}

Note that $G \subset \LCMs{G}$ and that $G = \LCMs{G}$ if and only if $G$ is lcm-closed.
Every lcm-closed set $G$ can be regarded as a join-semilattice, where the order is given by divisibility and the join is the lcm.
We will often consider the associated lattice $\LCMl{G} := G \cup \set{\hat{0}}$ of an lcm-closed set $G$, where $\hat{0}$ is an additional minimal element.
The element $\hat{0}$ could be regarded as the lcm of the empty set, but we do not identify $\hat{0}$ with $1_\KK$.

Note that if $I \subsetneq S, I \neq \<0\>$ is a proper monomial ideal, then $L_I = \LCMl{\LCMs{G(I)}}$, where $G(I)$ is a minimal generating set of $I$.

\begin{remark}\label{rem:semilattice}
\begin{asparaenum}
	\item Following \cite{GPW}, for $I = S$ we get $L_I=\set{1_\KK}$, but $\LCMl{\LCMs{G(I)}} = \set{\hat{0}, 1_\KK}$.
	While this difference is minor, we think that the latter is in fact a more convenient definition of the lcm-lattice of $S$. For example, the lcm-lattice of $S$ should be isomorphic to the lcm-lattice of a principal ideal.
	
	\item The associated lattice $\LCMl{G}$ of an lcm-closed set $G \subset \Mon{S}$ is atomistic if and only if the minimal elements of $G$ form the minimal set of generators of some monomial ideal (in fact, $\<G\>$).
	So in general, $\LCMl{G}$ could be regarded as an lcm-lattice associated to a not necessarily minimal set of generators of $\<G\>$.
	The reason why we consider non-minimal generating sets is that we are going to consider maps of lcm-lattices.
	Even if we start with a minimal set of generators of some monomial ideal, its image might not be minimal anymore.
\end{asparaenum}
\end{remark}

\subsection{Stanley depth and maps changing it}
Consider the polynomial ring $S$ endowed with the multigraded structure.
Let $M$ be a finitely generated graded $S$-module, and let $\lambda$ be a homogeneous element in $M$.
Let $Z \subset \set{X_1, \ldots , X_n}$ be a subset of the set of indeterminates of $S$. The $\KK[Z]$-submodule $\lambda \KK[Z]$ of $M$ is called a \emph{Stanley space} of $M$ if $\lambda \KK[Z]$ is free (as $\KK[Z]$-submodule).
A \emph{Stanley decomposition} of $M$ is a finite family
\[
\mathcal{D}=(\KK[Z_i],\lambda_i)_{i\in \mathcal{I}}
\]
in which $Z_i \subset \{X_1, \ldots ,X_n\}$ and $\lambda_i\KK[Z_i]$ is a Stanley space of $M$ for each $i \in \mathcal{I}$ with
\[
M = \bigoplus_{i \in \mathcal{I}} \lambda_i\KK[Z_i]
\]
as a multigraded $\KK$-vector space. This direct sum carries the structure of an $S$-module and has therefore a well-defined depth. The \emph{Stanley depth} $\sdepth\ M$ of $M$ is defined to be the maximal depth of a Stanley decomposition of $M$.

In the same fashion we introduce the following definition.
\begin{definition}\label{def:spdim}
The \emph{Stanley projective dimension} $\spdim_S M$ of $M$ is the minimal projective dimension of a Stanley decomposition of $M$.
\end{definition}
Note that $\spdim_S M = n - \sdepth_S M$ by the Auslander-Buchsbaum formula.
While this definition might seem redundant, it turns out that our results (for example, see Theorem \ref{thm:lcmmap}) are more naturally stated in terms of the Stanley projective dimension.
Further, Stanley's inequality is equivalent to the following:
\begin{equation}\label{conj:stanley}
\pdim\ M \geq \spdim\ M. \tag{$\star$}
\end{equation}
The original Stanley conjecture \cite{St} stated that \eqref{conj:stanley} holds for all finitely generated modules. As mentioned above, this was recently disproved by Duval et al. \cite{counterexample}.

In the proof of our main result we use a certain type of poset maps which was first introduced in \cite{IKM}.
Before we recall the definition, let us introduce a notation.
For $a, g \in \NN^n$ with $a \leq g$, we define
\[ \rho_g(a) := \#\set{j \in [n] \with a_j = g_j}.\]

\begin{definition}\cite[Definition 3.1]{IKM} \label{def:1}
Let $\ell \in \ZZ$ and $n, n' \in \NN$. A monotonic map $\phi: \NN^{n} \to \NN^{n'}$ is said to \emph{change the Stanley depth} by $\ell$ with respect
to $g \in \NN^{n}$ and $g' \in\NN^{n'}$, if it satisfies the following two conditions:
\begin{enumerate}
\item $\phi(g) \leq g'$.
\item For each interval $[a',b'] \subset [0,g']$, the (restricted) preimage $\phi^{-1}([a',b']) \cap [0, g]$ can be written as a finite disjoint union $\bigcup_i [a^i, b^i]$ of intervals, such that
\[
\rho_g(b^i) \geq \rho_{g'}(b') + \ell \ \ \mbox{~for all }i.
\]
\end{enumerate}
\end{definition}

Those maps were profusely studied in \cite{IKM}. For the reader's convenience we recall a key result, which motivates the above definition and is used in the sequel.

\begin{proposition}[{\cite[Proposition 3.3]{IKM}}]\label{prop:sdep}
	Let  $n, n' \in \NN$, $S = \KK[X_1,\ldots,X_n]$ and $S' = \KK[X_1,\ldots,X_{n'}]$ be two polynomial rings and let $J' \subsetIJ I' \subset S'$ be monomial ideals.
	Consider a monotonic map $\phi: \NN^{n} \to \NN^{n'}$ and
	let $\Phi: S \to S'$ be the map defined by $\Phi(X^a) = X^{\phi(a)}$.
	Set $I := \Phi^{-1}(I')$, $J := \Phi^{-1}(J')$.
	Choose $g \in \NN^n$ and $g' \in \NN^{n'}$, such that every minimal generator of $I$ and $J$ divides $X^g$, and every minimal generator of $I'$ and $J'$ divides $X^{g'}$.
	If $\phi$ changes the Stanley depth by $\ell \in \ZZ$ with respect to $g$ and $g'$, then
	\begin{itemize}
		\item[(i)] $I$ and $J$ are monomial ideals, and
		\item[(ii)] $\sdepth_S\ I/J \geq \sdepth_{S'}\ I'/J' + \ell$.
	\end{itemize}
\end{proposition}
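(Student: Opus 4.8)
The plan is to reduce the statement to the combinatorial description of the Stanley depth in terms of partitions of a finite poset into intervals (as established by Herzog, Vladoiu and Zheng), and then to transport an optimal such partition for $I'/J'$ to one for $I/J$ along the map $\phi$. Throughout I identify a nonzero monomial ideal with the order filter of exponent vectors of its monomials, write $\mathrm{mon}(\cdot)$ for this filter, and let $\Phi$ denote the map $X^a \mapsto (X')^{\phi(a)}$ on monomials, so that by construction $\mathrm{mon}(I) = \phi^{-1}(\mathrm{mon}(I'))$ and $\mathrm{mon}(J) = \phi^{-1}(\mathrm{mon}(J'))$. For part~(i): since $\phi$ is monotonic and $\mathrm{mon}(I')$ is an up-set in $\NN^{n'}$, the preimage $\phi^{-1}(\mathrm{mon}(I'))$ is an up-set in $\NN^{n}$, so $I = \Phi^{-1}(I')$ is a well-defined monomial ideal; the same applies to $J$, and $\mathrm{mon}(J') \subseteq \mathrm{mon}(I')$ yields $J \subseteq I$ (with $J = I$ the module $I/J$ is zero and there is nothing to prove in~(ii), so one may assume $J \subsetIJ I$).

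For part~(ii) I would invoke the following standard fact: whenever $g \in \NN^n$ is such that every minimal generator of $I$ and of $J$ divides $X^g$, one has
\[
\sdepth_S I/J \;=\; \max_{\mathcal{P}}\ \min_{[c,d]\in\mathcal{P}}\ \#\set{j\in[n] \with d_j = g_j},
\]
where $\mathcal{P}$ ranges over all partitions of the finite poset $P^g_{I/J} := \set{a\in[0,g] \with X^a \in I\setminus J}$ into intervals (and analogously with $g'$ for $I'/J'$). Fix a partition $\mathcal{P}' = \set{[a'_i, b'_i]}_i$ of $P^{g'}_{I'/J'}$ attaining $\sdepth_{S'} I'/J'$. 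Condition~(1) of \Cref{def:1} together with monotonicity of $\phi$ gives $[0,g] \subseteq \phi^{-1}([0,g'])$, and combining this with $\mathrm{mon}(I)=\phi^{-1}(\mathrm{mon}(I'))$ and $\mathrm{mon}(J)=\phi^{-1}(\mathrm{mon}(J'))$ yields
\[
P^g_{I/J} \;=\; \phi^{-1}\!\bigl(P^{g'}_{I'/J'}\bigr)\cap[0,g] \;=\; \bigsqcup_i\Bigl(\phi^{-1}([a'_i, b'_i])\cap[0,g]\Bigr).
\]

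By condition~(2) of \Cref{def:1}, for each $i$ the set $\phi^{-1}([a'_i, b'_i])\cap[0,g]$ is a finite disjoint union of intervals $[a^{i,k}, b^{i,k}]$ with $\#\set{j\in[n] \with (b^{i,k})_j = g_j} \geq \#\set{j\in[n'] \with (b'_i)_j = g'_j} + \ell$. Hence $\mathcal{P} := \set{[a^{i,k}, b^{i,k}]}_{i,k}$ is a partition of $P^g_{I/J}$ into intervals, and the displayed formula for the Stanley depth gives
\[
\sdepth_S I/J \;\geq\; \min_{i,k}\,\#\set{j \with (b^{i,k})_j = g_j} \;\geq\; \min_i\,\#\set{j \with (b'_i)_j = g'_j} + \ell \;=\; \sdepth_{S'} I'/J' + \ell,
\]
which is exactly~(ii).

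The delicate ingredient is the poset description of $\sdepth$ quoted above. In a self-contained proof one must check that the threshold imposed on $g$ --- namely that every minimal generator of $I$ and $J$ divides $X^g$ --- is already large enough for this description to hold, and, more substantially, that every Stanley decomposition of $I/J$ can be replaced, without lowering its depth, by one arising from an interval partition of the finite poset $P^g_{I/J}$; the subtle point there is that truncating at $g$ must not destroy the free directions of any Stanley space, which forces one to normalize the Stanley spaces of the decomposition beforehand. Once this description is in place, the transport step above is essentially routine; the only other point worth verifying is the inclusion $[0,g]\subseteq\phi^{-1}([0,g'])$, which is immediate from condition~(1) of \Cref{def:1} and the monotonicity of $\phi$.
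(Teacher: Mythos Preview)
The paper does not prove this proposition at all; it is quoted verbatim from \cite[Proposition~3.3]{IKM} and used as a black box. Your argument is correct and is essentially the intended one: the very shape of \Cref{def:1}---preimages of intervals decompose into intervals with a prescribed gain in the count of coordinates equal to $g_j$---is tailored precisely to the Herzog--Vladoiu--Zheng description of $\sdepth$ via interval partitions of $P^g_{I/J}$, so pulling back an optimal partition is the natural (and, as far as I can tell, the only reasonable) route. Your verification that $P^g_{I/J}=\phi^{-1}(P^{g'}_{I'/J'})\cap[0,g]$ and the subsequent bookkeeping are fine; the caveat you flag about the HVZ theorem being valid for the chosen $g$ is exactly the point one has to check, and it is handled in \cite{HVZ}.
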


\section{Labelings and lcm-closed sets}\label{sec:lcm}
In this section we present several results on lcm-closed sets that are later needed for the proof of the main results of this paper.
Throughout the section, let $S = \KK[X_1,\dotsc, X_n]$ be a fixed polynomial ring.

A \emph{labeling} of a finite lattice $L$ is a map $w: L \to \Mon{S}$, i.e. an assignment of a monomial to each element of $L$.
Now, for a finite lcm-closed set $G \subset \Mon{S}$ we define a labeling $w_G: \LCMl{G} \to \Mon{S}$ as follows:
For the minimal and maximal elements $\hat{0}, \hat{1}\in G$, we define $w_G(\hat{0}) \defa \gcd\set{p \in G}$ and $w_G(\hat{1}) := 1_\KK$. For every other $m$ we set
\[
w_G(m) \defa \frac{1}{m} \gcd\set{p \in G \with p > m}.
\]
This labeling was introduced in \cite[Eq. (3.3)]{M}.
It satisfies the following inversion formula:
\begin{proposition}\label{prop:inversion}
For $m \in G$ it holds that
\[
m = \prod_{\substack{q \in \LCMl{G} \\ q \ngeq m}} w_G(q).
\]
\end{proposition}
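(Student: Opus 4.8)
The plan is to verify the formula one variable at a time: it suffices to show that for each fixed index $i \in [n]$, the exponent of $X_i$ on both sides agrees, i.e.
\[
\ord{i}{m} = \sum_{\substack{q \in \lat_G \\ q \ngeq m}} \ord{i}{w_G(q)}.
\]
First I would unwind the definition of $w_G$ in terms of the order function. For $q = \hat 0$ we have $\ord{i}{w_G(\hat 0)} = \min\set{\ord{i}{p} : p \in G}$; for $q = \hat 1$ the contribution is $0$; and for every other $q$ we have $\ord{i}{w_G(q)} = \min\set{\ord{i}{p} : p \in \slat_G,\ p > q} - \ord{i}{q}$. The key observation is that for a fixed monomial $m \in \slat_G$, the set $\set{\ord{i}{q} : q \in \lat_G}$ is a finite chain of nonnegative integers; list the distinct values that occur as $0 = c_0 < c_1 < \dots < c_r$, where $c_0 = 0$ is attained at $\hat 0$ and $c_r = \ord{i}{\hat 1\text{-side maxima}}$ — more precisely $c_r = \max\set{\ord{i}{q} : q \in \slat_G}$, which since $\hat 1 = \lcm(G) \in \slat_G$ equals $\ord{i}{\hat 1}$. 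Note $\ord{i}{m}$ is one of these values, say $\ord{i}{m} = c_t$.

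Next I would reorganize the sum on the right-hand side as a telescoping sum. Observe that for $q \neq \hat 0, \hat 1$, the quantity $\min\set{\ord{i}{p} : p \in \slat_G,\ p > q}$ is the smallest value in our chain strictly exceeding $\ord{i}{q}$, \emph{provided} such an element $q$ has some strict upper bound with strictly larger $i$-th order; one has to argue this is the ``successor'' value $c_{k+1}$ when $\ord{i}{q} = c_k < c_r$. The point is that if $p > q$ then $p \in \slat_G$, and taking $p = \hat 1$ shows the set $\set{p \in \slat_G : p > q}$ is nonempty; moreover among all $q' \in \slat_G$ with $\ord{i}{q'} = c_{k+1}$, at least one lies above $q$ — this needs a small lattice argument, perhaps using that $\lat_G$ is atomistic and joins compute lcm's, so one can produce an element of order exactly $c_{k+1}$ above $q$ by joining $q$ with a suitable atom. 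Granting this, $\ord{i}{w_G(q)} = c_{k+1} - c_k$ whenever $\ord{i}{q} = c_k$ with $k < r$, and $\ord{i}{w_G(q)} = 0$ when $\ord{i}{q} = c_r$ (as then the min over $p > q$ of $\ord{i}{p}$ is still $c_r$, since $\hat 1$ has order $c_r$ and nothing exceeds it).

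Now I would count, for each level $k$ with $0 \le k < r$, how many $q \in \lat_G$ with $q \ngeq m$ satisfy $\ord{i}{q} = c_k$, and show this count is exactly $1$ when $k < t$ (i.e. $c_k < \ord{i}{m}$) and $0$ when $k \ge t$. For the second part: if $\ord{i}{q} \ge \ord{i}{m} = c_t$, I claim $q \ge m$, hence such $q$ do not appear in the sum; this should follow from \Cref{lem:finitelattice}(2) together with the structure of $\lat_G$ — actually the cleaner route is to note that any $q$ with $q \ngeq m$ must have some variable in which $q$ ``falls short'' of $m$, but to pin down that it is variable $i$ for exactly the right set of $q$'s is the crux. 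The correct statement is rather: among the elements $q$ with $q \ngeq m$, grouping by the vector $(\ord{1}{q}, \dots, \ord{n}{q})$ is too coarse; instead one shows directly that $\sum_{q \ngeq m} \ord{i}{w_G(q)}$ telescopes to $\ord{i}{m}$ because the $q$'s contributing are precisely those lying ``just below'' $m$ in the $i$-th coordinate filtration. Concretely, I expect the identity to reduce to: the restricted poset $\set{q \in \lat_G : q \ngeq m}$ contains, for each $k < t$, exactly one element $q$ with $\ord{i}{w_G(q)}$ spanning the gap $(c_k, c_{k+1}]$, namely the meet-irreducible-type elements detecting $m$.

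\textbf{Main obstacle.} The hard part will be the combinatorial bookkeeping in the last step: showing that the telescoping sum collects each ``jump'' in the $i$-th exponent exactly once as $q$ ranges over $\set{q \ngeq m}$, and not over-counting or under-counting when several elements share the same $i$-th order. I anticipate this is handled by choosing, for each relevant $q$, the minimal element of $\slat_G$ above $q$ in the right coordinate sense, and invoking \Cref{lem:finitelattice} to match up meet-irreducibles; the atomistic structure of $\lat_G$ (joins are lcm's) should make every needed element explicitly available, so that no existence issue actually obstructs the argument — it is purely a matter of organizing the sum correctly.
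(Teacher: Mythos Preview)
Your overall strategy---checking the identity variable by variable and aiming for a telescoping sum---matches the paper. But there is a genuine gap at exactly the point you flag as the main obstacle, and it is not just bookkeeping: one of your intermediate claims is false and you are missing the structural observation that repairs it.

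You assert that $\ord{i}{w_G(q)} = c_{k+1} - c_k$ whenever $\ord{i}{q} = c_k$ with $k < r$. This is not true in general. If $q$ is not maximal among the elements with $i$-th order $\le c_k$, then there exists some $p > q$ still satisfying $\ord{i}{p} = c_k$, and hence $\ord{i}{w_G(q)} = 0$. So at a fixed level $c_k$ there may be many elements $q$, but typically only one of them contributes to the $i$-th exponent of the product; the rest contribute nothing. Your attempted fallback (``if $\ord{i}{q} \ge \ord{i}{m}$ then $q \ge m$'') is likewise false and cannot rescue the count.

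The missing idea, which the paper supplies, is to name that one contributing element explicitly. For each $i$ and each $j$, set
\[
f^i_j := \bigvee \set{q \in \lat_G \with \ord{i}{q} \le c_j}.
\]
Then $\ord{i}{f^i_j} = c_j$, the $f^i_j$ form a chain, and one checks directly that the elements $f^i_0, \dotsc, f^i_{r-1}$ are \emph{precisely} the $q \in \lat_G$ with $X_i \mid w_G(q)$, with $\ord{i}{w_G(f^i_j)} = c_{j+1} - c_j$. Now the condition $q \ngeq m$ restricted to this chain becomes transparent: since $m$ itself appears in the join defining $f^i_t$ (where $\ord{i}{m} = c_t$), we have $f^i_j \ge m$ iff $j \ge t$. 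Hence the only $q$'s contributing to the $i$-th exponent of the right-hand side are $f^i_0, \dotsc, f^i_{t-1}$, and the sum telescopes to $c_t = \ord{i}{m}$. This is the organizing device you were reaching for with ``meet-irreducible-type elements detecting $m$''; once you have it, the proof is short. (A side remark: $\lat_G$ need not be atomistic unless $G$ is a minimal generating set, so that assumption should not enter your argument.)
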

\noindent Note that the formula for $m = \hat{0}$ evaluates to $1_\KK$, but $\hat{0} \neq 1_\KK$.
\begin{proof}
In \cite[Proposition 3.6]{M}, Mapes proves that this formula holds for the minimal elements of $G$, under the additional assumption that $\LCMl{G}$ is atomistic.
However, the argument given there actually shows the formula in the generality claimed here.
\end{proof}

The next corollary gives a characterization of a lcm-closed sets of \emph{squarefree} monomials in terms of $w_G$.
\begin{corollary}\label{cor:squarefree}
	Let $G \subset \Mon{S}$ be a finite lcm-closed set. Then $G$ contains only squarefree monomials if and only if $w_G(m)$ is squarefree for every $m \in \LCMl{G}$ and $\gcd(w_G(m), w_G(m')) = 1_\KK$ for all $m,m' \in \LCMl{G}, m \neq m'$.
\end{corollary}
\begin{proof}
	If the given conditions are satisfied, then all elements of $G$ are squarefree, since by Proposition \ref{prop:inversion} every element of $G$ is a product of different monomials $w_G(m)$ for some $m \in \LCMl{G}$.
	On the other hand, if every element of $G$ is squarefree, then in particular the lcm of all elements is a squarefree monomial. But this is the product of all the $w_G(m)$, so the claimed properties follow.
\end{proof}

We now come to our first key result, which in particular gives a complete description of those pairs $(L, w: L \to \Mon{S})$ that come from a monomial ideal and extends Theorem 3.2 and Proposition 3.6 in \cite{M} to not necessarily atomistic lattices.
\newcommand{\ltoi}[1]{\psi(#1)}
\begin{definition}
	A labeling $w: L \to \Mon{S}$ (on a finite lattice $L$) is \emph{admissible} if it satisfies the following two conditions:
		\begin{enumerate}[ref=\theenumi\emph{\alph*},label=(\alph*)]
			\item\label{aaa} $\gcd(w(a), w(b)) = 1_\KK$ for incomparable $a,b \in L$.
			\item\label{bbb} $w(a) \neq 1_\KK$ if $a \in L$ is meet-irreducible and $w(\hat{1}_L)=1_\KK$.
		\end{enumerate}
\end{definition}
We consider two pairs $(L,w)$ and $(L', w')$ of finite lattices with labelings to be isomorphic if $L \cong L'$ and the isomorphism maps $w$ to $w'$.

\begin{theorem}\label{thm:lcmgeneral}
	The map $G \mapsto (\LCMl{G}, w_G)$ is a bijection between the set of finite lcm-closed sets $G \subset \Mon{S}$, and the set of isomorphism classes of pairs $(L,w: L \to \Mon{S})$ where $L$ is a finite lattice and $w$ is a admissible labeling.
	The inverse map is given by mapping a pair $(L,w)$ to the set
	\begin{equation}\label{eq:lcmgen}
		G(L,w) := \left\{ \prod_{\substack{b \in L \\ b \ngeq a}} w(b) \with a \in L \right\}
	\end{equation}
\end{theorem}
This theorem allows the very simple construction of ideals with a given lcm-lattice. Indeed, for a given lattice $L$ one only needs to choose an admissible labeling. Moreover, considering the possible admissible labelings $w: L \to \Mon{S}$ one gets an overview over the class of all monomial ideals with a fixed lcm-lattice.

\begin{proof}[Proof~of~Theorem~3.4.]
\newcommand{\JC}{\mathcal{J}}
\newcommand{\Lw}{\mathcal{L}}
\newcommand{\Lwa}{\Lw_{\mathrm{ad}}}
	Let $\JC$ denote the set of all finite lcm-closed subsets of $\Mon{S}$ and let $\Lw$ the set of isomorphism classes of pairs $(L,w)$ where $L$ is a finite lattice and $w: L \to \Mon{S}$ is a labeling.
	Moreover, let $\Lwa \subset \Lw$ denote the set where we assume the labeling to be admissible.
	We denote the two maps of the claim by $f: \JC \to \Lw$ and $g: \Lw \to \JC$.
	
	\Cref{prop:inversion} shows that $g \circ f$ is the identity map, so in particular $f$ is injective.
	Therefore, to prove the claim it is sufficient to show that the image of $f$ is $\Lwa$, in other words we need to show that
	\begin{enumerate}
		\item $w_G$ is admissible for each $G \in \JC$, and
		\item for each $(L,w) \in \Lwa$, there exists a $G \in \JC$ with $(L,w) \cong (\LCMl{G}, w_G)$.
	\end{enumerate}
	
	The first item is proven in Lemma 3.4 and Lemma 3.5 of \cite{M}. Again, in \cite{M} the result is only claimed for atomistic lattices, but the argument given there holds in our general setup.
	
	For the second item, our candidate for $G$ is $G(L,w)$ as given by \Cref{eq:lcmgen}.
	So we need to show that $G$ is lcm-closed, $L \cong \LCMl{G(L,w)}$ and (under this isomorphism), $w = w_G$.
	In \cite[Theorem 2.3]{M} it is shown that $L \cong \LCMl{G(L,w)}$ in the case that $L$ is atomistic. However, the argument given there does not directly apply to the general situation, so we need a new proof.
	
	Consider $\psi : L \to G$ defined by $\ltoi{\hat{0}} = 1_\KK$ and
	\[\ltoi{a} \defa \prod_{\substack{b \in L \\ b \ngeq a}} w(b) \]
	for $a \in L, a \neq \hat{0}$.
	The labeling $w$ being admissible implies that for each variable $X_i$ such that $X_i\mid\ltoi{\hat{1}_L}$, the set of $a \in L$ such that $X_i \mid w(a)$ forms a chain $a^i_1 < a^i_2 < \dotsb < a^i_{r_i}$. Set $a^i_{r_i+1} := \hat{1}_L$ for $1 \leq i \leq n$, where $r_i = 0$ if $X_i\nmid\ltoi{\hat{1}_L}$.
	For $a\in L$ let $s(i, a)$ be the minimal index $k$, such that $a^i_k \geq a$.
	As a notation, for a monomial $m \in \Mon{S}$ we define $\ord{i}{m}$ to be the exponent of $X_i$ in $m$.
	We extend this definition to $\LCMl{G}$ by setting $\ord{i}{\hat{0}} := 0$ for all $i$.
	Then
	\begin{equation}\label{eq:ordnung}
		\ord{i}{\ltoi{a}} = \ord{i}{\prod_{j=1}^{s(i,a)-1} w(a^i_j)}.
	\end{equation}
	In particular, if $\ltoi{a} \mid \ltoi{b}$, then $s(i,a) \leq s(i,b)$ for $1 \leq i \leq n$, because of the inequality $\ord{i}{\ltoi{a}} \leq \ord{i}{\ltoi{b}}$.
	
	Let $a, b\in L$. We claim that $a \leq b$ if and only if $\ltoi{a} \mid \ltoi{b}$.
	It is clear from the definition that $a \leq b$ implies $\ltoi{a} \mid \ltoi{b}$, so assume that $\ltoi{a} \mid \ltoi{b}$.
	Every non-maximal element in a finite lattice is the meet of the set of meet-irreducible elements greater than or equal to it.
	So, in order to show $a \leq b$, we may prove the following: Each meet-irreducible element $m$ which is greater than or equal to $b$ is also greater than or equal to $a$.
	So consider such an element $m$. As $w(m) \neq 1_\KK$, there exists an index $i$ such that $X_i \mid w(m)$.

	Then there exists a $k$ such that $m = a^i_{k}$ (where $1\leq k\leq r_i$).
	Now $b \leq m$ implies that $s(i, b) \leq k$.
	But as remarked above, the fact that $\ltoi{a} \mid \ltoi{b}$ implies that $s(i, a) \leq s(i, b) \leq k$, hence $m \geq a$.

	It follows that $\psi$ is injective, as $\ltoi{a} = \ltoi{b}$ implies $\ltoi{a} \mid \ltoi{b} \mid \ltoi{a}$ and thus $a \leq b \leq a$.
	
	Further, we claim that $\ltoi{a\vee b}$ equals the lcm of $\ltoi{a}$ and $\ltoi{b}$.
	For this, first note that $P \geq a \vee b$ if and only if $P \geq a$ and $P \geq b$.
	This implies that $s(i, a \vee b) = \max(s(i,a), s(i, b))$ for all $i$.
	Therefore
	\[\ord{i}{\ltoi{a \vee b}} = \max(\ord{i}{\ltoi{a}}, \ord{i}{\ltoi{b}}) = \ord{i}{\lcm(\ltoi{a}, \ltoi{b})} \]
	for all $i$, hence $\ltoi{a\vee b} = \lcm(\ltoi{a}, \ltoi{b})$.
	
	Summarizing, we have shown that $\psi$ is an injective map $L \to G$ which preserves the join.
	The latter implies that $G$ lcm-closed, i.e. $G = \LCMs{G}$. Hence $\psi$ induces an isomorphism $L \to \LCMl{G}$.
	\medskip
	
	It remains to show that $w(a) = w_G(\ltoi{a})$ for all $a \in L$.
	By definition of $w_G$, we have to show that $\gcd( \ltoi{b} \with b > a ) = \ltoi{a} w(a)$ if $a \neq \hat{0}$ and $\gcd( \ltoi{b} \with b > a ) = w(a)$ for $a = \hat{0}$. We handle both cases together by proving that
	\[
		\ord{i}{\gcd( \ltoi{b} \with b > a )} = \ord{i}{\ltoi{a}} + \ord{i}{w(a)}
	\]
	for each $i$.
	We compute
	\begin{align*}
		\ord{i}{\gcd( \ltoi{b} \with b > a )} &= \min\set{\ord{i}{\ltoi{b}} \with b > a} \\
		&=\min\set{\ord{i}{\prod_{j=1}^{s(i,b)-1} w(a^i_j)} \with b > a} \\
		&=\ord{i}{\prod_{j=1}^{k-1} w(a^i_j)}
	\end{align*}
	where $k := \min\set{s(i,b) \with b > a}$.
	We compute further:
	\begin{align*}
		k = \min(s(i,b) \with b > a) &= \min\set{\min\set{j\with a_j^i \geq b} \with b > a} \\
		&= \min\set{j\with a_j^i > a} \\
		&= \begin{cases}
		s(i,a) + 1 &\text{if } a = a^i_{s(i,a)}, \\
		s(i,a) &\text{otherwise.}
		\end{cases}
	\end{align*}
	Note that in the second case it holds that $\ord{i}{w(a)} = 0$ (otherwise $a = a^i_{s(i,a)}$ because $w$ is admissible).
	
	Recall that $\ord{i}{\ltoi{a}} = \ord{i}{\prod_{j=1}^{s(i,a)-1} w(a^i_j)}$. So we conclude that
	\begin{align*}
	\ord{i}{\gcd( &\ltoi{b} \with b > a )} =\\
	&=\begin{cases}
		\ord{i}{\prod_{j=1}^{s(i,a)-1} w(a^i_j)} + \ord{i}{w( a^i_{s(i,a)})} &\text{if } a = a^i_{s(i,a)},\\
		\ord{i}{\prod_{j=1}^{s(i,a)-1} w(a^i_j)} &\text{otherwise}
	\end{cases} \\
	&=  \ord{i}{\ltoi{a}} + \ord{i}{w(a)}.
	\end{align*}
\end{proof}

\section{Invariants and surjective join-preserving maps}  \label{sec:sur}
This section contains the main results of this paper. They are presented in Subsection \ref{ssec:main} and Subsection \ref{subsec:sur3}. Here we show that the Stanley depth, as well as the usual depth, \emph{are determined} by the lcm-lattice. Subsection \ref{subsec:sur1} contains several related technical results. We end with an example which shows that the $\ZZ$-graded Hilbert depth is not determined by the lcm-lattice.

\subsection{The structure of surjective join-preserving maps} \label{subsec:sur1}
In this Subsection we prove some structural results on surjective join-preserving maps;
these will be needed in the sequel.
The first two structural lemmata will be useful in Subsection \ref{ssec:main}.
\medskip

\newcommand{\pinv}[1]{{#1}^{\dagger}}

	Let $\delta: L \to L'$ be a surjective join-preserving map of finite lattices.
	We define $\pinv{\delta}: L' \to L$ as $\pinv{\delta}(a) \defa \bigvee \delta^{-1}(a)$.
\begin{lemma}\label{lem:pseudoinverse}
	The map $\pinv{\delta}$ has the following properties:
	\begin{enumerate}
		\item $\delta \circ \pinv{\delta}: L' \to L'$ is the identity and $\pinv{\delta}$ is (thus) injective.
		\item $\pinv{\delta}$ is monotonic, i.e. $a \leq b$ implies $\pinv{\delta}(a) \leq \pinv{\delta}(b)$ for $a,b \in L'$.
		\item For $\alpha \in L$ and $b \in L'$, it holds that $\delta(\alpha) \leq b$ if and only if $\alpha \leq \pinv{\delta}(b)$.
	\end{enumerate}
\end{lemma}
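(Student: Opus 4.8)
The plan is to verify the three properties essentially directly from the definition $\pinv{\phi}(a) = \bigvee \phi^{-1}(a)$, using only that $\phi$ is surjective and join-preserving on finite join-semilattices. The crucial preliminary observation, which I would establish first, is that $\phi(\pinv{\phi}(a)) = a$ for every $a \in L'$: since $\phi$ is join-preserving, $\phi(\bigvee \phi^{-1}(a)) = \bigvee_{x \in \phi^{-1}(a)} \phi(x) = \bigvee\set{a} = a$; here $\phi^{-1}(a)$ is non-empty because $\phi$ is surjective. This immediately gives (1): $\phi \circ \pinv{\phi} = \mathrm{id}_{L'}$, and a map with a left inverse (indeed here a two-sided-style section) is injective — if $\pinv{\phi}(a) = \pinv{\phi}(b)$ then $a = \phi(\pinv{\phi}(a)) = \phi(\pinv{\phi}(b)) = b$.

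For (3), I would prove both implications. If $\alpha \leq \pinv{\phi}(b)$, then applying the order-preserving map $\phi$ and using the preliminary observation gives $\phi(\alpha) \leq \phi(\pinv{\phi}(b)) = b$. Conversely, if $\phi(\alpha) \leq b$, I want $\alpha \leq \bigvee \phi^{-1}(b)$. The key point is that $\alpha$ together with $\pinv{\phi}(b)$ joins to something still lying over $b$: set $\beta := \alpha \vee \pinv{\phi}(b)$, then $\phi(\beta) = \phi(\alpha) \vee \phi(\pinv{\phi}(b)) = \phi(\alpha) \vee b = b$ since $\phi(\alpha) \leq b$. Hence $\beta \in \phi^{-1}(b)$, so $\beta \leq \bigvee \phi^{-1}(b) = \pinv{\phi}(b)$; but also $\pinv{\phi}(b) \leq \beta$ by construction, so $\beta = \pinv{\phi}(b)$, and therefore $\alpha \leq \beta = \pinv{\phi}(b)$, as desired.

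Finally, (2) follows formally from (3): if $a \leq b$ in $L'$, then $\phi(\pinv{\phi}(a)) = a \leq b$, so taking $\alpha = \pinv{\phi}(a)$ in statement (3) yields $\pinv{\phi}(a) \leq \pinv{\phi}(b)$. I do not anticipate a genuine obstacle here; the only mildly delicate point is the converse direction of (3), where one must resist the temptation to argue pointwise on $\phi^{-1}$ and instead use the join trick above — the subtlety being that an arbitrary $\alpha$ with $\phi(\alpha) \leq b$ need not itself lie in $\phi^{-1}(b)$, but its join with $\pinv{\phi}(b)$ does. Everything else is immediate from $\phi$ being join-preserving (hence order-preserving) plus surjectivity guaranteeing the relevant fibres are non-empty so that the joins are taken over non-empty sets.
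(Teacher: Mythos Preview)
Your argument is correct. The only real difference from the paper is the order in which (2) and (3) are handled: the paper first proves (2) directly by showing $\pinv{\phi}(a) \vee \pinv{\phi}(b) \leq \pinv{\phi}(a \vee b)$ (since the left side lies in $\phi^{-1}(a \vee b)$), and then uses monotonicity of $\pinv{\phi}$ to get the converse in (3) via $\alpha \leq \pinv{\phi}(\phi(\alpha)) \leq \pinv{\phi}(b)$. You instead prove (3) first with the join trick $\beta = \alpha \vee \pinv{\phi}(b)$ and then obtain (2) as a formal corollary. Both routes are equally elementary; yours has the mild conceptual advantage of making explicit that (3) is the adjunction $\phi \dashv \pinv{\phi}$, from which monotonicity of $\pinv{\phi}$ is automatic, while the paper's route records the slightly stronger intermediate fact that $\pinv{\phi}$ is sub-join-preserving.
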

\begin{proof}
	The first claim is immediate from the fact that $\delta$ preserves joins.
	For the second, note that $\delta(\pinv{\delta}(a) \vee \pinv{\delta}(b)) = \delta(\pinv{\delta}(a)) \vee \delta(\pinv{\delta}(b)) = a \vee b$ for any $a,b \in L'$.
	Thus $\pinv{\delta}(a) \vee \pinv{\delta}(b)$ is contained in the preimage of $a \vee b$ and hence $\pinv{\delta}(a) \vee \pinv{\delta}(b) \leq \pinv{\delta}(a \vee b)$. Now assume that $a \leq b$. Then
	\[ \pinv{\delta}(a) \leq \pinv{\delta}(a) \vee \pinv{\delta}(b) \leq \pinv{\delta}(a \vee b) = \pinv{\delta}(b). \]
	For the last claim, note that $\alpha \leq \pinv{\delta}(b)$ implies $\delta(\alpha) \leq \delta(\pinv{\delta}(b)) = b$, and $\delta(\alpha) \leq b$ implies $\alpha \leq \pinv{\delta}(\delta(\alpha)) \leq \pinv{\delta}(b)$ (since $\pinv{\delta}$ is monotonic).
\end{proof}

\begin{lemma}\label{lem:pullback}
	Let $G \subset \Mon{S}$ and $G' \subset \Mon{S'}$ be two finite lcm-closed sets of squarefree monomials in two polynomial rings.
	Assume that there exists a surjective join-preserving map $\delta: \LCMl{G} \to \LCMl{G'}$ with $\delta^{-1}(\hat{0}) = \set{\hat{0}}$ such that $\deg w_G(\pinv{\delta}(m')) \geq \deg w_{G'}(m')$ for all $m' \in \LCMl{G'}$.
	Then there exists a ring homomorphism $\Psi: S \to S'$ sending a subset of the variables injectively to the variables of $S'$ and the other variables to $1$. This map satisfies $\Psi(m) = \delta(m)$ for $m \in G$.
\end{lemma}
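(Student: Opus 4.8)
The plan is to construct the ring homomorphism $\Psi$ explicitly by specifying the image of each variable, using the structure of the standard map $w_G$ and the pseudoinverse $\pinv{\phi}$ from \Cref{lem:pseudoinverse}. Since $G$ consists of squarefree monomials, \Cref{cor:squarefree} tells us that the monomials $w_G(M)$ for $M \in \lat_G$ are pairwise coprime squarefree monomials, so each variable $X_i$ of $S$ divides $w_G(M_i)$ for exactly one element $M_i \in \lat_G$. The idea is to define $\Psi(X_i)$ to be some variable of $S'$ dividing $w_{G'}(\phi(M_i))$ when $\phi(M_i) \neq \hat 1$, and $\Psi(X_i) = 1$ otherwise — but of course we must be careful to make this assignment injective on the variables that are not sent to $1$. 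The hypothesis $\deg w_G(\pinv{\phi}(m')) \geq \deg w_{G'}(m')$ is precisely what guarantees that there are enough variables of $S$ available: for each element $m' \in \lat_{G'}$, the number of variables $X_i$ with $X_i \mid w_G(\pinv{\phi}(m'))$ is at least the number of variables $Y_j$ of $S'$ with $Y_j \mid w_{G'}(m')$.

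The concrete construction I would carry out: for each $m' \in \lat_{G'}$, fix an injection $\iota_{m'}$ from the set of variables of $S'$ dividing $w_{G'}(m')$ into the set of variables of $S$ dividing $w_G(\pinv{\phi}(m'))$; this exists by the degree inequality. Now, given a variable $X_i$ of $S$, let $M_i$ be the unique element with $X_i \mid w_G(M_i)$. If $X_i$ lies in the image of $\iota_{\phi(M_i)}$, say $X_i = \iota_{\phi(M_i)}(Y_j)$, set $\Psi(X_i) := Y_j$; otherwise set $\Psi(X_i) := 1$. One checks this is well-defined and that the variables sent to something other than $1$ are mapped injectively: if $\Psi(X_i) = \Psi(X_{i'}) = Y_j$ with both nontrivial, then $Y_j \mid w_{G'}(\phi(M_i))$ and $Y_j \mid w_{G'}(\phi(M_{i'}))$, and since the $w_{G'}$-values are pairwise coprime (\Cref{cor:squarefree} again), $\phi(M_i) = \phi(M_{i'})$, whence $X_i = \iota_{\phi(M_i)}(Y_j) = X_{i'}$.

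It remains to verify $\Psi(m) = \phi(m)$ for all $m \in \slat_G$. By \Cref{prop:inversion} applied to both $\slat_G$ and $\slat_{G'}$, it suffices to check this on the generators via the formulas $m = \prod_{Q \ngeq m} w_G(Q)$ and $\phi(m) = \prod_{Q' \ngeq \phi(m)} w_{G'}(Q')$; so the real task is to compare $\ord{j}{\Psi(m)}$ and $\ord{j}{\phi(m)}$ for each variable $Y_j$ of $S'$. Fix $Y_j$ and let $m' = \pinv{\phi}(m')$-type consideration identify the set of $Q' \in \lat_{G'}$ with $Y_j \mid w_{G'}(Q')$; for squarefree ideals this set is a singleton $\{N_j\}$ (by \Cref{cor:squarefree}), so $Y_j \mid \phi(m)$ iff $N_j \ngeq \phi(m)$. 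On the other side, $Y_j$ appears in $\Psi(m) = \prod_{Q \ngeq m} \Psi(w_G(Q))$ exactly via those $Q$ with $\iota_{\phi(N'_{})}$... — more precisely $Y_j \mid \Psi(m)$ iff there is some $Q \in \lat_G$ with $Q \ngeq m$ and $\Psi$ sending a variable of $w_G(Q)$ to $Y_j$, which by construction forces $\phi(Q) = N_j$ (the unique element whose $w_{G'}$-value is divisible by $Y_j$), and such a $Q$ with $Q \ngeq m$ exists iff $\pinv{\phi}(N_j) \ngeq m$, iff (by \Cref{lem:pseudoinverse}(3)) $\phi(m) \not\geq \ldots$ hmm, iff $N_j \ngeq \phi(m)$ using \Cref{lem:pseudoinverse}(3) in the form $\phi(Q) \le N_j \Leftrightarrow Q \le \pinv{\phi}(N_j)$ together with $\phi$ being order-preserving and surjective. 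Thus $Y_j \mid \Psi(m) \iff N_j \ngeq \phi(m) \iff Y_j \mid \phi(m)$, and since everything is squarefree the exponents match, giving $\Psi(m) = \phi(m)$.

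The main obstacle I anticipate is the bookkeeping in the last step: correctly identifying, for a fixed target variable $Y_j$, both the unique $\lat_{G'}$-element it ``belongs to'' and the corresponding condition on $\lat_G$-elements, and then translating between the two via the adjunction properties of $\pinv{\phi}$ in \Cref{lem:pseudoinverse}(3). The squarefree hypothesis is used crucially here — it is what collapses the ``set of elements whose $w$-value is divisible by a fixed variable'' to a single element on both sides (rather than a chain), which is what makes the exponent comparison reduce to a clean combinatorial equivalence of order relations. Everything else is routine once the injections $\iota_{m'}$ are fixed, though some care is needed to confirm $\Psi$ is genuinely a well-defined ring map (i.e.\ the assignment on variables does not conflict — which is automatic since each $X_i$ belongs to a unique $M_i$).
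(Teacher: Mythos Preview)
Your proposal is correct and follows essentially the same approach as the paper's proof: both use \Cref{cor:squarefree} to see that the $w_G$-values are pairwise coprime squarefree monomials, use the degree hypothesis to match variables of $w_G(\pinv{\phi}(m'))$ with those of $w_{G'}(m')$, and then verify $\Psi(m)=\phi(m)$ via the inversion formula \Cref{prop:inversion} together with the adjunction in \Cref{lem:pseudoinverse}(3). The paper's write-up is slightly more streamlined in that it computes $\Psi(w_G(q))$ in one stroke (it equals $w_{G'}(m')$ if $q=\pinv{\phi}(m')$ and $1$ otherwise) and then multiplies, whereas you track individual variables $Y_j$; but the content is the same. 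One small omission in your sketch: a variable $X_i$ of $S$ need not divide any $w_G(M)$ at all (if $X_i$ does not occur in $G$), so your $M_i$ may be undefined---just send such $X_i$ to $1$, as the paper does.
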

\begin{proof}
	As $G$ and $G'$ consist only of squarefree monomials, it holds that all values of $w_G$ and $w_{G'}$ are squarefree and pairwise coprime by Corollary \ref{cor:squarefree}.

	We define $\Psi$ as follows:
	For every $m' \in \LCMl{G'}$, choose $\deg w_{G'}(m')$ many variables dividing $w_G(\pinv{\delta}(m'))$ and let them map bijectively to the variables dividing $w_{G'}(m')$. The remaining variables of $S$ are mapped to one.
	By construction, for $m\in \LCMl{G}$ it holds that
	\[ \Psi( w_G(m)) = \begin{cases}
		w_{G'}(m') &\text{~if~} m \in \pinv{\delta}(\LCMl{G'}) \text{~and~} m = \pinv{\delta}(m'); \\
		1 &\text{~if~} m \notin \pinv{\delta}(\LCMl{G'}).
	\end{cases} \]
	Using Proposition \ref{prop:inversion} we conclude that
\[	\Psi(m) = \Psi(\prod_{\substack{q \in \LCMl{G} \\ q \ngeq m}} w_G(q))
		= \prod_{\substack{q \in \LCMl{G} \\ q \ngeq m}} \Psi(w_G(q))
		= \prod_{\substack{q' \in \LCMl{G'} \\ q' \ngeq \delta(m)}} w_{G'}(q') \\
		= \delta(m).
\]
	where $m \in G$. For the third equality, we used part (3) of Lemma \ref{lem:pseudoinverse}.
	Note that the last equality holds because $\delta(m) \neq \hat{0}$.
\end{proof}

The next two structural lemmata will be used in Subsection \ref{ssec:gens}.
\newcommand{\cl}[1]{\overline{#1}}
Fix a meet-irreducible element $a \in L$ and let $a_+ \in L$ denote the unique element covering it.
We consider the equivalence relation $\sim_a$ on $L$ defined by setting $a \sim_a a_+$ and any other element is equivalent only to itself.
\begin{lemma}\label{lem:homom}
	There is a natural lattice structure on $L / \sim_a$, such that the canonical surjection $\pi_a: L \to L/\sim_a$ preserves the join.
	Moreover, if $L$ is atomistic and $a$ is not an atom, then $L / \sim_a$ is atomistic.
\end{lemma}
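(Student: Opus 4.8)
The plan is to define the join on $L/\sim_a$ by transporting it along $\pi_a$ and then to check that this is well defined. Write $\cl{M}$ for the $\sim_a$-class of $M \in L$; the only nontrivial class is $\{a, a_+\}$. I would declare $\cl{M} \vee \cl{N} := \cl{M \vee N}$, so that $\pi_a$ preserves joins by fiat; the content is that this does not depend on the representatives. Since the only ambiguity comes from replacing $a$ by $a_+$ or vice versa, it suffices to show that $\pi_a(a \vee N) = \pi_a(a_+ \vee N)$ for every $N \in L$. The first key step is thus this compatibility check. If $N \leq a$, then $a \vee N = a$ and $a_+ \vee N = a_+$, which lie in the same class. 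If $N \not\leq a$, I claim $a \vee N = a_+ \vee N$: indeed $a \vee N$ is strictly above $a$ (as $N \not\le a$), hence $a \vee N \geq a_+$ since $a_+$ is the \emph{unique} element covering the meet-irreducible $a$ and every element strictly above $a$ dominates $a_+$; therefore $a \vee N \geq a_+ \vee N \geq a \vee N$, giving equality. So in all cases $\pi_a(a \vee N) = \pi_a(a_+ \vee N)$, and the join on $L/\sim_a$ is well defined.

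Next I would verify that $(L/\sim_a, \vee)$ is genuinely a join-semilattice, i.e. that $\vee$ is associative, commutative, idempotent, and that the induced relation $\cl{M} \leq \cl{N} :\!\iff \cl{M}\vee\cl{N} = \cl{N}$ is a partial order with $\cl{M}\vee\cl{N}$ the least upper bound. Commutativity, idempotency and associativity are inherited immediately from $L$ via the surjection $\pi_a$ (any identity in the $\vee$-operation pushes forward). That $\pi_a$ preserves joins is then automatic. For the partial order, the only point needing care is antisymmetry: if $\cl{M} \leq \cl{N}$ and $\cl{N} \leq \cl{M}$ with $M \neq N$, the sole possibility is $\{M,N\} = \{a,a_+\}$, and these are identified, so antisymmetry holds on $L/\sim_a$. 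Transitivity and reflexivity are formal. This establishes the first sentence of the lemma.

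For the last sentence, assume $L$ is atomistic and $a$ is not an atom. I must show every element of $L/\sim_a$ is a join of atoms of $L/\sim_a$. The atoms of $L/\sim_a$ are the classes $\cl{x}$ with $x$ an atom of $L$: since $a$ is not an atom, no atom of $L$ is identified with anything, so distinct atoms of $L$ give distinct classes, and conversely the minimal nonzero classes are exactly these (note $\cl{\hat 0} = \hat 0$ because $\hat 0 \neq a$, as $a$ covers $\hat 0$ would make $a$ an atom). Now take any $\cl{M} \in L/\sim_a$; in $L$ we have $M = \bigvee_{x \in A} x$ for some set $A$ of atoms of $L$, hence $\cl{M} = \pi_a(M) = \bigvee_{x \in A} \pi_a(x) = \bigvee_{x \in A} \cl{x}$, a join of atoms of $L/\sim_a$. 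The main obstacle in the whole argument is the well-definedness step, and there the crucial input is precisely that $a$ is meet-irreducible, so that $a_+$ is unique and absorbs every element lying strictly above $a$; everything else is routine bookkeeping pushed forward along the surjection $\pi_a$.
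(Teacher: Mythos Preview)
Your proposal is correct and follows essentially the same route as the paper: both define $\cl{M}\vee\cl{N}:=\cl{M\vee N}$, reduce well-definedness to the identity $a\vee N \sim_a a_+\vee N$, and then observe that associativity, commutativity and idempotency are inherited from $L$, with the atomistic claim following because $\pi_a$ is a bijection on atoms when $a$ is not one. You simply spell out in detail what the paper leaves as ``follows easily'' and a citation to \cite[Thm.~2.10]{DP}; one small remark is that in the paper's conventions $L$ is a join-semilattice without a designated $\hat{0}$, so your aside about $\cl{\hat 0}$ should be phrased in terms of atoms being the minimal elements of $L$, but this does not affect the argument.
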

\begin{proof}
	Let $\cl{b}$ denote the equivalence class of an element $b \in L$. We define $\cl{b} \vee \cl{c} := \cl{b \vee c}$. To show that this is well-defined we have to prove that $b_1 \sim_a b_2$ and $c_1 \sim_a c_2$ implies $b_1 \vee c_1 \sim_a b_2 \vee c_2$.
	For this, we distinguish the cases that either $b_1 = b_2$ or $\set{b_1, b_2} = \set{a, a_+}$ and similarly for $c_1, c_2$. One easily sees that each case is either trivial or follows from the observation that $a\vee b = a_+ \vee b$ for all $b \in L, b \neq a$.
	
	The $\vee$-operation on $L/\sim_a$ inherits associativity, commutativity and idempotency from the join of $L$, cf. \cite[Thm. 2.10]{DP}. Moreover, $L/\sim_a$ inherits a minimal element from $L$, so it is in fact a lattice.
	It is clear that $\pi_a$ preserves this join.
	The last statement is also clear as $\pi_a$ is a bijection on the atoms.
\end{proof}

\begin{lemma}\label{lem:factor}
Let $L, L'$ be finite lattices and $\delta: L \to L'$ a join-preserving map.
\begin{enumerate}
	\item If $\delta$ is not injective, then there exists a meet-irreducible element $a \in L$ such that $\delta(a) = \delta(a_+)$.
	\item If $\delta(a) = \delta(a_+)$ for some meet-irreducible element $a \in L$, then $\delta$ factors through $L / \sim_a$.
\end{enumerate}
\end{lemma}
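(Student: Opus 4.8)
For the proof I would treat the two parts separately, dispatching the easier statement (2) first. Suppose $\phi(a)=\phi(a_+)$ for a meet-irreducible $a\in L$. The relation $\sim_a$ identifies only $a$ with $a_+$ and leaves everything else alone, so the hypothesis says precisely that $\phi$ is constant on every $\sim_a$-class; hence there is a unique map of sets $\bar\phi\colon L/\sim_a\longrightarrow L'$ with $\bar\phi\circ\pi_a=\phi$. To check that $\bar\phi$ preserves joins I would simply compute, for $b,c\in L$,
\[ \bar\phi(\overline{b}\vee\overline{c})=\bar\phi(\overline{b\vee c})=\phi(b\vee c)=\phi(b)\vee\phi(c)=\bar\phi(\overline{b})\vee\bar\phi(\overline{c}), \]
using the description of the join on $L/\sim_a$ from \Cref{lem:homom} together with the fact that $\phi$ preserves joins. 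That is the whole of (2).

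For (1) the plan is an extremal-element argument. Since $\phi$ preserves the order, every non-empty fiber $\phi^{-1}(t)$ is closed under $\vee$ and hence has a maximum $M_t:=\bigvee\phi^{-1}(t)$. As $\phi$ is not injective, the set $\mathcal{T}:=\{t\in L'\with|\phi^{-1}(t)|\geq2\}$ is non-empty, and I would choose $t\in\mathcal{T}$ such that $M_t$ is a maximal element of $\{M_{t'}\with t'\in\mathcal{T}\}$ in the order of $L$. Put $F:=\phi^{-1}(t)$ and $M:=M_t$, and let $a$ be a maximal element of the non-empty poset $F\setminus\{M\}$; note $a<M$. A first observation is that $a$ is covered by $M$: along a saturated chain from $a$ to $M$ the monotonicity of $\phi$ forces every vertex to lie in $F$, so an interior vertex would contradict the maximality of $a$ in $F\setminus\{M\}$.

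The heart of the matter is to show that $M$ is the \emph{only} element covering $a$. Suppose $a\lessdot b$ with $b\neq M$. Then $b\not\leq M$ (otherwise $a<b\leq M$ together with $a\lessdot M$ would force $b=M$), so $M<M\vee b$; and since $t=\phi(a)\leq\phi(b)$ we get $\phi(M\vee b)=\phi(M)\vee\phi(b)=\phi(b)$. Hence $b$ and $M\vee b$ are two distinct elements of $\phi^{-1}(\phi(b))$, so $\phi(b)\in\mathcal{T}$ and $M_{\phi(b)}\geq M\vee b>M$, contradicting the maximality of $M$. Therefore $a$ is meet-irreducible with $a_+=M$, and $\phi(a)=t=\phi(M)=\phi(a_+)$. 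The one genuinely delicate point, and the one I expect to be the main obstacle, is exactly this choice of fiber: an arbitrary fiber of size at least two need not contain a meet-irreducible element whose cover lies in the same fiber, and it is precisely the global maximality of $M_t$ that makes the covering relation above $a$ unique. Everything else is routine bookkeeping with covering relations and monotonicity.
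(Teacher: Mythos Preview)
Your proof is correct and essentially identical to the paper's: part~(2) is verbatim the same computation, and in part~(1) your $M=M_t$ is precisely the paper's maximally chosen element $b$ (maximal in $L$ among elements with a fiber of size $\geq 2$), after which the contradiction ruling out a second cover of $a$ is the same join computation. The only step you leave implicit is that $b\neq M\vee b$ (i.e.\ $M\not\leq b$), but this follows from $a\lessdot b$, $a<M$, and $b\neq M$ by the same reasoning you gave for $b\not\leq M$.
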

\begin{proof}
(1) There exists a maximal element $b \in L$ such that the pre-image of $\delta(b)$ has at least two elements, that is $|\delta^{-1}(\delta(b))| > 1$.
	Choose another element $b' \in \delta^{-1}(\delta(b))$,  $b' \neq b$. Then $b' < b$ by maximality, as $\delta(b \vee b') = \delta(b) \vee \delta(b) = \delta(b)$.
	It is easy to see that the interval $[b',b]$ is mapped to $\delta(b)$, so we may choose $a \in L$ such that $\delta(a) = \delta(b)$ and $a$ is covered by $b$.
	We claim that this $a$ is meet-irreducible. Assume to the contrary that there exists another element $c \neq b$ covering $a$. Then
	\[ \delta(c) = \delta(c \vee a) = \delta(c) \vee \delta(a) = \delta(c) \vee \delta(b) =  \delta(c \vee b) \]
	As $b < b \vee c$, it follows from our choice of $b$ that $c = c \vee b$ and thus $c > b$, a contradiction.

(2)	It is clear that $\delta$ factors though $L / \sim_a$ set-theoretically, i.e. there exists a map $\bar{\delta}: L/\sim_a \to L'$ such that $\delta = \bar{\delta} \circ \pi_a$. So we only need to show that $\bar{\delta}$ preserves the join.
	This is an easy computation:
	\[ \bar{\delta}(\cl{b} \vee \cl{c}) = \bar{\delta}(\cl{b \vee c}) = \delta(b \vee c) = \delta(b) \vee \delta(c) = \bar{\delta}(\cl{b}) \vee \bar{\delta}(\cl{c})\]
	for $b,c \in L$.
\end{proof}

\subsection{Stanley projective dimension and surjective join-preserving maps} \label{ssec:main}
In this Subsection we prove the following theorem, which is the main result of this paper.
\begin{theorem}\label{thm:lcmmap}
	Let $H \subsetneq G \subset \Mon{S}$ and $H' \subsetneq G' \subset \Mon{S'}$ be four lcm-closed sets of monomials in two (possibly) different polynomial rings, such that $\<H\> \subsetneq \<G\>, \<H'\> \subsetneq \<G'\>$.
	Assume further that there exists a surjective join-preserving map $\delta: \LCMl{G} \to \LCMl{G'}$ with $\delta^{-1}(\hat{0}) = \set{\hat{0}}$ such that $\delta(H) = H'$.
	Then
	\[ \spdim_S \<G\>/\<H\> \geq \spdim_{S'} \<G'\>/\<H'\>. \]
\end{theorem}

For monomial ideals $J \subsetneq I \subset S$ and $J' \subsetneq I' \subset S'$, the theorem applies in particular to $G := \LCMs{G(I) \cup G(J)}, H := \LCMs{G(J)}, G' := \LCMs{G(I') \cup G(J')}$ and $H' := \LCMs{G(J')}$. However, in general we do not assume that the sets $G, H$ come from \emph{minimal} sets of generators.

A particular case is if $I'$ and $J'$ are the polarizations of $I$ and $J$, respectively.
Therefore, the theorem is a generalization of the authors' result on polarization \cite[Theorem 4.4]{IKM}.
This does not diminish the importance of \cite[Theorem 4.4]{IKM}, since it is required in the proof of Theorem \ref{thm:lcmmap}.
We give a small example to demonstrate that the assumption $\delta^{-1}(\hat{0}) = \set{\hat{0}}$ is necessary.
\begin{example}
	Let $S = \KK[x,y]$, $G = \set{1_\KK,x,y,xy}, G' = \set{x,y,xy}$ and $H = H' = \emptyset$.
	There is a surjective join-preserving map $\delta: \LCMl{G} \to \LCMl{G'}$ defined by mapping $1_\KK$ to $\hat{0}$ and every other element to itself. It holds that $\delta^{-1}(\hat{0}) = \set{\hat{0}, 1_\KK} \neq \set{\hat{0}}$, and indeed the conclusion of \Cref{thm:lcmmap} does not hold:
	\[ \spdim_S \<G\>/\<H\> = \spdim_S S = 0 \ngeq 1 = \spdim_S \<x,y\> = \spdim_S \<G'\>/\<H'\> \]
\end{example}

\bigskip

Before we give the proof of Theorem \ref{thm:lcmmap} we prepare two lemmata.
\begin{lemma}\label{lemma:localize}
	Let $J \subset I \subset S[Y] = \KK[X_1,\dotsc,X_n,Y]$ be two squarefree monomial ideals.
	Let $J' \subset I' \subset S$ be the images of $J$ and $I$ under the map sending $Y$ to $1$.
	Then we have
	\[ \spdim_{S[Y]} I/J \geq \spdim_S I'/J'. \]
\end{lemma}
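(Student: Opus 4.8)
The plan is to bypass the lcm-lattice machinery here and argue directly with Stanley decompositions, reducing everything to a single restriction observation. First I would organize the data by the $Y$-degree. Since both $I$ and $J$ are squarefree, every minimal generator has $Y$-degree at most $1$; thus $I$ is determined by the pair of squarefree monomial ideals $I^{(0)} \subseteq I^{(1)}$ of $S$, where $I^{(0)}$ is generated by the monomials $m$ with $m \in I$ and $I^{(1)}$ by the monomials $m$ with $mY \in I$, and similarly $J$ is determined by $J^{(0)} \subseteq J^{(1)}$. Sending $Y$ to $1$ identifies both $I^{(0)}$ and $I^{(1)}$ with the ideal $I^{(1)}$, so $I' = I^{(1)}$ and $J' = J^{(1)}$; moreover a short direct computation (here the squarefreeness of \emph{both} ideals is used) shows that for $b \geq 1$ a monomial $X^{a}Y^{b}$ lies in $I \setminus J$ if and only if $X^{a} \in I^{(1)} \setminus J^{(1)} = I'\setminus J'$. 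Consequently the $\KK$-subspace $V \subseteq I/J$ spanned by the residue classes of the monomials $X^{a}Y \in I\setminus J$ is a graded $S$-submodule of $I/J$ (it is stable under multiplication by each $X_{i}$, though not by $Y$), and $\overline{X^{a}} \mapsto \overline{X^{a}Y}$ is an isomorphism of $\ZZ^{n}$-graded $S$-modules between $I'/J'$ and $V$.

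The heart of the argument is a restriction step. Let $\bigoplus_{i} \lambda_{i}\KK[Z_{i}]$ be a Stanley decomposition of $I/J$ over $S[Y]$ of depth $r$, so that $|Z_{i}| \geq r$ for all $i$, and write $\lambda_{i} = X^{a_{i}}Y^{b_{i}}$ with $b_{i} \in \set{0,1}$. The residue classes of the monomials of $S[Y]$ lying in $I\setminus J$ form a $\ZZ^{n+1}$-graded $\KK$-basis of $I/J$, and both $V$ and each Stanley space $\lambda_{i}\KK[Z_{i}]$ are $\KK$-spanned by a subset of this basis; hence $V = \bigoplus_{i}\bigl(V \cap \lambda_{i}\KK[Z_{i}]\bigr)$. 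Then I would check by a short case analysis that each summand is either $0$ or a Stanley space of $V$ over $S$ (involving only the $X$-variables): if $Y \notin Z_{i}$ all monomials of $\lambda_{i}\KK[Z_{i}]$ have $Y$-degree $b_{i}$, so the intersection is all of $\lambda_{i}\KK[Z_{i}] = X^{a_{i}}Y\,\KK[Z_{i}]$ (with $|Z_{i}|$ variables) when $b_{i}=1$ and is $0$ when $b_{i}=0$; if $Y \in Z_{i}$ the intersection is the single ``$Y$-degree-$1$ layer'' $X^{a_{i}}Y\,\KK[Z_{i}\setminus\set{Y}]$, a Stanley space with $|Z_{i}|-1$ variables. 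Discarding the zero summands yields a Stanley decomposition of $V$ over $S$ of depth at least $r-1$.

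Applying this to an optimal Stanley decomposition of $I/J$ gives $\sdepth_{S} I'/J' = \sdepth_{S} V \geq \sdepth_{S[Y]} I/J - 1$; equivalently $\sdepth_{S[Y]} I/J \leq \sdepth_{S} I'/J' + 1$. Since $\spdim_{S[Y]} I/J = (n+1) - \sdepth_{S[Y]} I/J$ and $\spdim_{S} I'/J' = n - \sdepth_{S} I'/J'$, this rearranges to the asserted inequality $\spdim_{S[Y]} I/J \geq \spdim_{S} I'/J'$.

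The only step needing genuine care is the restriction step: one must verify that the intersections $V \cap \lambda_{i}\KK[Z_{i}]$ really reassemble $V$ as a direct sum — which holds because $V$ and every Stanley space are $\KK$-spans of subsets of the monomial basis of $I/J$, so intersection commutes with the direct sum — and that each nonzero intersection is honestly a Stanley space over $S$ with at least $|Z_{i}|-1$ variables; the slightly subtle case is $Y \in Z_{i}$ with $b_{i}=0$, where one must multiply the base monomial by $Y$ in order to stay inside $V$. The identification $I' = I^{(1)}$, the isomorphism $I'/J' \cong V$, and the closing arithmetic are all routine. (The degenerate case $I'=J'$, where $V=0$, is vacuous under the usual convention $\sdepth(0) = +\infty$.)
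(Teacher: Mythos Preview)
Your proof is correct and follows essentially the same idea as the paper: restrict a Stanley decomposition of $I/J$ to a submodule that identifies with $I'/J'$. The paper restricts to the $S[Y]$-submodule $M_{>0}$ of all elements of positive $Y$-degree, identifies $M_{>0} \cong I'/J' \otimes_S S[Y]$, and then invokes \cite[Proposition~5.1]{IJ} to remove the extra variable; you instead take the $S$-submodule $V$ of $Y$-degree exactly one and carry out the variable drop by hand in your case analysis, which makes the argument self-contained. One minor slip: the assertion $b_i \in \{0,1\}$ is unjustified (the $\lambda_i$ are arbitrary monomials of $I\setminus J$, not generators of $I$), but it is harmless---when $b_i \geq 2$ every monomial in $\lambda_i\KK[Z_i]$ has $Y$-degree at least $2$, so $V \cap \lambda_i\KK[Z_i] = 0$ and this summand is simply discarded along with the other zero summands.
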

\noindent This extends \cite[Lemma 2.2]{Ci2}, which shows only the case $J = \<0\>$.
\begin{proof}
	Let $M := I/J$ and let $M_{>0} \subset M$ be the $S[Y]$-submodule of those elements having positive $Y$-degree.
	Every Stanley decomposition of $M$ restricts to a Stanley decomposition of $M_{>0}$, hence $\spdim_{S[Y]} M \geq \spdim_{S[Y]} M_{>0}$.

	On the other hand, we have
	\[ M_{>0} = (I \cap \<Y\>) / (J \cap \<Y\>) \cong (I:Y)/(J:Y) = (I:Y^\infty) / (J:Y^\infty), \]
	where for the last equality we use that $I$ and $J$ are squarefree.
	But $I:Y^\infty = I' \otimes_S S[Y]$ and the same holds for $J$, hence $M_{>0} \cong I'/J' \otimes S[Y]$.
	By \cite[Proposition 5.1]{IJ} we conclude that $\spdim_{S[Y]} M_{>0} = \spdim_S I'/J'$ and the claim follows.
\end{proof}
The second lemma comprises the main part of the proof of the theorem.
\begin{lemma}\label{lemma:goingup}
	Let $H \subsetneq G \subset \Mon{S}$ be two finite lcm-closed sets of squarefree monomials,
	such that $\<H\> \subsetneq \<G\>$.
	Let $m \in \LCMl{G}$ be a fixed element.
	Then there exist two other finite lcm-closed sets of squarefree monomials $H' \subsetneq G' \subset \Mon{S[Y]}$ in one additional variable $Y$, such that the following holds:
	\begin{enumerate}
		\item There is an isomorphism $\delta: \LCMl{G} \to \LCMl{G'}$ of lattices,
			such that $\delta(H) = H'$ and for every $c \in \LCMl{G}$ it holds that
			\[ \deg w_{G'}(\delta(c)) \defa \begin{cases}
				\deg w_{G}(c) &\text{ if } c \neq m,\\
				\deg w_{G}(c) + 1 &\text{ if } c = m.
			\end{cases} \]
		\item $\<H'\> \subsetneq \<G'\> \subset S[Y]$.
		\item $\spdim_{S[Y]} \<G'\>/\<H'\> = \spdim_S \<G\>/\<H\>$.
	\end{enumerate}
\end{lemma}
\begin{proof}
	Consider the map $\tilde{\delta}: \Mon{S} \to \Mon{S[Y]}$ of monomials given by
	\[ \tilde{\delta}(c) = \begin{cases}
		c &\text{ if } c \mid m, \\
		Y \cdot c &\text{ if } c \nmid m.
	\end{cases} \]
	We define $G'$ and $H'$ as the images of $G$ resp.~$H$ under this map.
	It is easy to see that $\tilde{\delta}$ is injective and preserves the lcm of monomials.
	Thus $H' \subsetneq G'$, both sets are lcm-closed, and $\tilde{\delta}$ induces an isomorphism $\delta: \LCMl{G} \to \LCMl{G'}$.
	Moreover, it follows from the definitions that
	\[w_{G'}(m) =  \frac{1}{m} \gcd\set{p \in G' \with p > m} = Y\cdot{}w_{G}(m) \]
	and $w_{G'}(\delta(c)) = w_{G}(c)$ for every other $c \in \LCMl{G}$.
	Part (1) of the lemma is then proven. Part (2) follows straight from the fact that $\delta$ is monotonic.
	
	(3) Let $I := \<G\>, J := \<H\>, I' := \<G'\>$ and $J' := \<H'\>$.
	The inequality ``{$\geq$}'' follows from Lemma \ref{lemma:localize}, as $J$ and $I$ are the images of $J'$ and $I'$ under sending $Y$ to $1$. So we only need to prove the other inequality, which is equivalent to $\sdepth_{S[Y]} I'/J' \geq \sdepth_S I/J + 1$.
	To simplify the notation, we set $0_k \defa (0, \dotsc, 0) \in \NN^k$ and $1_k \defa (1, \dotsc, 1) \in \NN^k$ for $k \in \NN$.
	After relabeling of the variables, we may assume that $m = X_1 X_2 \dotsm X_l$ for some $1 \leq l \leq n$.
	Consider the map $\phi: \NN^{l} \times \NN^{n-l} \times \NN \to \NN^{l} \times \NN^{n-l}$ defined by
	\[
	\phi(h_1,h_2,x) \defa \begin{cases}
		(h_1, 0_{n-l}) &\text{ if } x = 0, \\
		(h_1, h_2) &\text{ if } x > 0.
	\end{cases}
	\]
	It is easy to see that $\phi$ is order preserving, and we claim that it changes the Stanley depth by $1$ with respect to $1_{n+1} \in \NN^{n+1}$ and $1_{n} \in\NN^n$ (see Definition \ref{def:1}).
		
	To prove this claim, it is enough to consider the map $\psi: \NN^{k} \times \NN \to \NN^{k}$, defined by
	\[
	\psi(h,x) \defa \begin{cases}
	0_{k} &\text{ if } x = 0; \\
	h &\text{ if } x > 0.
	\end{cases}
	\]
	where $k := n-l$. It is clear that $\phi=(id_{\NN^{l}}, \psi)$.
	One easily checks that
	\[
	\psi^{-1}(h) = \begin{cases}
	\set{(h,x)\with x > 0} & \text{ if } h \neq 0_k; \\
	\set{(w, 0) \with w \in \NN^{k}} &\text{ if } h = 0_k.
	\end{cases}
	\]
	Consider an interval $[a,b]\subset[0_{k},1_{k}]$. It follows that
	\[
	\psi^{-1}([a,b]) \cap [0_{k+1},1_{k+1}] = \begin{cases}
	[(a,1), (b,1)] & \text{ if } a \neq 0_k; \\
	[(0_k,0), (1_{k},0)] \cup [(0_k,1), (b,1)] & \text{ if } a = 0_k, b \neq 0_k, 1_k; \\
	[(0_k,0), (1_k,1)] & \text{ if } a = 0_k, b = 1_k; \\
	[(0_k,0), (1_k,0))] & \text{ if } a = 0_k, b = 0_k.
	\end{cases}
	\]
	In each case, the Stanley depth is increased (at least) by one.
	The only case that needs a closer look is the second.
	Here, $\rho_{1_{k+1}}((1_k,0)) = k$, but $\rho_{1_{k}}(b) < k$, because $b \neq 1_k$, so this also increases the Stanley depth.
	Therefore $\psi$ increases the Stanley depth (at least) by $1$, and so does $\phi=(id_{\NN^{l}}, \psi)$ (cf.~ \cite[Lemma 3.4]{IKM}).

	Next, let $\Phi: S[Y] \to S$ be map corresponding to $\phi$, i.e. the linear map given on monomials by $\Phi(X^a Y^b) = X^{\phi(a,b)}$.
	We claim that $I' = \Phi^{-1}(I)$, $J' = \Phi^{-1}(J)$.
	Once we have proven this, part (3) follows from \Cref{prop:sdep}.

	For this claim, it suffices to consider $I$ and $I'$. Note that $\Phi(G') = G$ and thus $I' \subset \Phi^{-1}(I)$, so we only need to prove the other inclusion.
	Let $E \subseteq \NN^n$ be the set of exponents of the monomials in $I$.
	Consider a minimal element $e = (h_1,h_2,x) \in \phi^{-1}(E)$.
	By minimality, it follows that $x \in \set{0,1}$. There are two cases:
	\begin{enumerate}
	\item[(i)] If $x = 1$, then $\phi(e) = (h_1,h_2)$. This is clearly contained in $E$ and it is indeed a minimal element of $E$, because if $E$ contains a smaller element $(h'_1,h'_2) < (h_1, h_2)$, then $(h'_1,h'_2, 1) \in \phi^{-1}(h'_1,h'_2) \subset \phi^{-1}(E)$, contradicting the minimality of $e$.
	Hence $X^{(h_1, h_2)} \in G$ and thus $X^{(h_1, h_2)}Y \in I'$.
	\item[(ii)] If $x = 0$, then $h_2 = 0_{n-l}$. Again, $(h_1, 0_{n-l})$ is a minimal element of $E$, because otherwise $(h'_1, 0_{n-l}, 0)$ would be a smaller element in $\phi^{-1}(E)$ as above.
	So again, $X^{(h_1, 0_{n-l})} \in G$ thus $X^{(h_1, 0_{n-l})} \in I'$.
	\end{enumerate}
	So we conclude that $I' \supseteq \Phi^{-1}(I)$ and thus $I' = \Phi^{-1}(I)$.
\end{proof}
\begin{proof}[Proof of \Cref{thm:lcmmap}]
Polarization allows us to replace the sets $G, H$ by lcm-closed sets of squarefree monomials $\tilde{H} \subsetneq \tilde{G}$, such that $\<\tilde{H}\> \subsetneq \<\tilde{G}\>$, $\LCMl{G}$ is isomorphic to $\LCMl{\tilde{G}}$, and the isomorphism restricts to an isomorphism $\LCMl{H} \cong \LCMl{\tilde{H}}$.
Similarly, we may replace $G'$ and $H'$ by lcm-closed sets of squarefree monomials $\tilde{H}' \subsetneq \tilde{G}'$ satisfying the same assumptions.
By composing the map $\delta$ with the given isomorphisms of lattices we also obtain a map $\tilde{\delta}: \LCMl{\tilde{G}} \to \LCMl{\tilde{G}}'$ satisfying the same assumptions as $\delta$.
Moreover, it is an easy corollary of \cite[Theorem 4.3]{IKM} that the Stanley projective dimension is invariant under polarization.
Thus, we may assume that all involved monomial are squarefree.

Next, after repeated application of Lemma \ref{lemma:goingup} to $H$ and $G$ we may also assume that
\[\deg w_{G}(\pinv{\delta}(m)) \geq \deg w_{G'}(m)\]
for all $m \in \LCMl{G'}$. Here, $\pinv{\delta}(m) = \bigvee \delta^{-1}(m)$ as defined in \Cref{subsec:sur1}.
It follows from Lemma \ref{lem:pullback} that $\<G'\>$ and $\<H'\>$ are the images of $\<G\>$ and $\<H\>$ under a homomorphism sending some of the variables to $1$.
Here we use that $\delta(H) = H'$.
Now the claim follows from Lemma \ref{lemma:localize}.
\end{proof}

\subsection{Projective dimension and surjective join-preserving maps} \label{subsec:sur3}
In this subsection, we provide the analogue of Theorem \ref{thm:lcmmap} for the usual projective dimension.
Let $I \subsetneq S$ and $I' \subsetneq S'$ be two monomial ideals, such that there exists a surjective join-preserving map $\delta: \slat_I \to \slat_{I'}$.
If we assume further that $\delta$ is bijective on the generators, then Theorem 3.3 of \cite{GPW} immediately implies that
\begin{equation}\label{eq:obs}
	\pdim_S S/I \geq \pdim_{S'} S'/I'.
\end{equation}
However, we would like to have a result in the same generality as Theorem \ref{thm:lcmmap}. Thus, we point out the following extension of the inequality \eqref{eq:obs}.

\begin{theorem}\label{thm:lcmdepth}
	Under the assumptions of \Cref{thm:lcmmap}, it holds that
	\[ \pdim_S \<G\>/\<H\> \geq \pdim_{S'} \<G'\>/\<H'\>. \]
\end{theorem}

\begin{proof}[Sketch of the proof]
The claim can be proven along the same lines as \cite[Theorem 3.3]{GPW}, therefore we only sketch the necessary modification of the proof given there.

Theorem 3.3 of \cite{GPW} is equivalent to our claim in the case that $H = \LCMs{\tilde{H}}$ for a \emph{minimal} set $\tilde{H}$ of monomial generator of $\<H\>$, $G = \LCMs{\tilde{H} \cup \set{1_\KK}}$,
 and the analogous assumptions on $G'$ and $H'$.

The proof goes by considering the Taylor resolution of $S/\<H\>$. It can be \qq{relabeled} to a resolution of $S'/\<H'\>$, and it is shown that this relabeling maps the minimal free resolution of $S/\<H\>$ to a (generally non-minimal) free resolution of $S'/\<H'\>$.

This proof can be extended to the situation $\<G\> \subseteq S$ by considering the Taylor resolution of $\<G\>/\<H\>$, cf \cite[Def. 3.3.3]{OW}.
One has to consider the Taylor resolution built from the given sets of generators $G, H, G',H'$, not from the minimal ones.

Finally, if $\delta$ is not bijective on atoms, then we replace $G'$ by a multiset: If several elements of $G$ are mapped to the same element of $G'$, then we include several copies of that element in the multiset, one for each preimage.
Consequently, one then considers the Taylor resolution of $\<G'\>/\<H'\>$ with respect to this multiset of generators.
This allows to argue as if $\delta$ were bijective on atoms.
\end{proof}

\begin{corollary}
	Let $J \subsetneq I \subset S$ and $J' \subsetneq I' \subset S'$ be four monomial ideals.
	Assume that $\LCMl{\LCMs{G(I) \cup G(J)}} \cong \LCMl{\LCMs{G(I') \cup G(J')}}$ and that this isomorphism restricts to an isomorphism $L_J \cong L_{J'}$.
	Then it holds that
	\begin{enumerate}
		\item $\spdim_S I/J = \spdim_{S'} I'/J'$;
		\item $\pdim_S I/J = \pdim_{S'} I'/J'$;
		\item $\sdepth_S I/J  - \depth_S I/J = \sdepth_{S'} I'/J' - \depth_{S'} I'/J'$.
	\end{enumerate}
\end{corollary}

In view of part (3) of the preceding corollary, one may ask how the quantity
$\sdepth_S S/I - \depth_S S/I$ behaves under surjective maps of the lcm-lattice.
In general there is no inequality, as can be seen in the following example.

\begin{example}
Consider the maximal ideal $I_1 = \<X_1, \dotsc, X_k\> \subset S = \KK[X_1,\dotsc, X_k]$.
It is well-known that $\sdepth_S S/I_1 = \depth_S S/I_1 = 0$ and thus $\sdepth_S S/I_1 - \depth_S S/I_1 = 0$.

Moreover, consider the ideal $I_2 := \<\frac{x_1\dotsm x_{k}}{x_i} \with 1 \leq i \leq k\> \subset S$.
Its lcm-lattice consists only of $k$ atoms, a maximal element and a minimal element.
Thus every atomistic lattice $L$ with $k$ atoms can be mapped onto it, by mapping atoms to atoms, $\hat{0}$ to $\hat{0}$ and every other element to the maximal element of $L_{I_2}$.
This holds in particular for the lcm-lattice of the ideal $\<x^{k-1}, x^{k-2} y, \dotsc, x y^{k-2}, y^{k-1}\> \subset \KK[x,y]$.
So it follows from Theorem \ref{thm:lcmmap} and Theorem \ref{thm:lcmdepth} that $\spdim_S S/I_2 \leq 2$ and $\pdim_S S/I_2 \leq 2$.
On the other hand, it holds that $\pdim_S S/I_2 \geq 2$ because $I_2$ is not principal and $\spdim_S S/I_2 \geq 2$ by Proposition \ref{prop:boundI} below.
So we can conclude that $\pdim_S S/I_2 = \spdim_S S/I_2 = 2$, hence $\sdepth_S S/I_2 - \depth_S S/I_2 = 0$.

If now $I' \subset S'$ is an arbitrary monomial ideal with $k$ minimal generators, then there are surjective maps $\slat_{I_1} \rightarrow \slat_{I'}$ and $\slat_{I'} \rightarrow \slat_{I_2}$.
Thus if $\sdepth_{S'} S'/I' - \depth_{S'} S'/I' \neq 0$, then this quantity is in general not monotonic under surjective maps.
For example, one may take $I'$ as any monomial ideal whose depth depends on the characteristic of the field.
\end{example}

As a final remark in this section, let us point out that the \emph{$\ZZ$-graded Hilbert depth} is \emph{not} determined by the lcm-lattice.
The $\ZZ$-graded Hilbert depth was introduced by Uliczka \cite{Ul}. It gives an upper bound for both the Stanley depth and the usual depth and is defined as follows:
For a $\ZZ$-graded module $M$ over a standard $\ZZ$-graded polynomial ring $S$, the $\ZZ$-graded Hilbert depth is the maximal depth of any $\ZZ$-graded $S$-module with the same $\ZZ$-graded Hilbert function as $M$.
Usually, the $\ZZ$-graded Hilbert depth is easier to compute than the Stanley depth (see \cite{AP2}).
However, as the following example shows, the analogue of our main result Theorem \ref{thm:lcmmap} does not hold for this invariant:
\begin{example}
	Let $S = \KK[x_1,x_2,x_3,x_4]$ and consider the ideals $I := \<x_1,x_2,x_3,x_4\>$ and $J = \<x_1^3, x_2^2, x_3,x_4\>$ in $S$.
	It is clear that the lcm-lattices of $I$ and $J$ are isomorphic, as both ideals are complete intersections.
	Moreover, the $\ZZ$-graded Hilbert depth of $I$ has been shown to be $4/2 = 2$ in \cite[Example 3.4]{Ul}.
	On the other hand, the Hilbert series of $J$ is
	\[ \frac{2T-T^3-T^4+2T^6-T^7}{(1-T)^4} =  \frac{T^6}{(1-T)^4} + \frac{2T+2T^2+T^3+T^6}{(1-T)^3}.\]
	From the right hand side of the equation we can read off that the $\ZZ$-graded Hilbert depth of $J$ is at least $3$ (\cite[Lemma 2.2]{Ul}).
	Hence the $\ZZ$-graded Hilbert depth is not determined by the isomorphism type of the lcm-lattice.
\end{example}

\section{Applications} \label{sec:applications}
\newcommand{\nice}{invariants-monotone}
Theorems \ref{thm:lcmmap} and \ref{thm:lcmdepth} are the sources for several applications to which this section is devoted.
Essentially all inequalities for the Stanley projective dimension we derive in this section rely on Theorem \ref{thm:lcmmap}.
Using Theorem \ref{thm:lcmdepth} one obtains with the same proof corresponding inequalities for the usual projective dimension.
More generally, this holds for any invariant of an ideal or its lcm-lattice which satisfies the conclusion of Theorem \ref{thm:lcmmap}. These are, for example, cardinality, length, width, breadth, order dimension and interval dimension of the lcm-lattice.

\smallskip
We will use Theorem \ref{thm:lcmmap} several times in this section, so it seems convenient to introduce a name for the maps satisfying its hypotheses. So we call a map $\delta: L \to L'$ between finite lattices \emph{\nice{}} if it is join-preserving, surjective, and $\delta^{-1}(\hat{0}) = \set{\hat{0}}$.

\newcommand{\bool}[1]{\mathfrak{B}(#1)}
\newcommand{\sbool}[1]{\overline{\mathfrak{B}}(#1)}
\newcommand{\maxk}{\mathfrak{m}}

\subsection{Bounds for the Stanley depth in terms of generators}\label{ssec:gens}
For $k \in \NN$ let $\bool{k}$ denote the boolean lattice on $k$ atoms, i.e. the lattice of subsets of a $k$-element set. Note that $\bool{k}$ is the lcm-lattice of any ideal generated by $k$ variables.
\begin{remark}\label{lem:free}
	For every atomistic lattice $L$ on $k$ atoms, there exists an \nice{} map $\delta: \bool{k} \to L$, which may be constructed as follows. Let $\delta$ map the atoms of $\bool{k}$ bijectively on the atoms of $L$ and set $\delta(\hat{0}) = \hat{0}$. For every other element $a \in \bool{k}$ we set $\delta(a) := \delta(a_1) \vee \dotsb \vee \delta(a_l)$ where $a = a_1 \vee a_2 \vee \dotsb \vee a_l$ is the unique (up to order) way to write $a$ as a join of atoms.
\end{remark}

First, we give a uniform proof of important results previously obtained by several authors providing bounds on the Stanley depth.
\begin{proposition}\label{prop:boundI}
	Let $k > 1$ and let $I \subset S$ be a monomial ideal with $k$ minimal generators. Let further $\maxk_k := (Y_1, \dotsc, Y_k) \subset S_k := \KK[Y_1, \dotsc, Y_k]$ be the monomial maximal ideal on $k$ generators. Then the following inequalities hold:
\begin{enumerate}
	\item $1 \leq \spdim_S I \leq \spdim_{S_k} \maxk_k = \lfloor\frac{k}{2}\rfloor$
	\item $2 \leq \spdim_S S/I \leq \spdim_{S_k} S_k/\maxk_k = k$
\end{enumerate}
Moreover, if $I$ is a complete intersection, then the upper bounds are attained.
\end{proposition}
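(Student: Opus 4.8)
The plan is to prove Proposition \ref{prop:boundI} by combining the monotonicity theorems (\Cref{thm:lcmmap}) with explicit computations for the maximal ideal, and then to analyze the extremal cases separately. First I would establish the upper bounds. By \Cref{lem:free}, since $\slat_I$ is atomistic on $k$ atoms, there is a surjective join-preserving map $\bool{k} \longrightarrow \slat_I$; since $\bool{k} = \slat_{\maxk_k}$, applying \Cref{thm:lcmmap} (in the case $J = J' = (0)$) gives $\spdim_{S_k} \maxk_k \geq \spdim_S I$ and $\spdim_{S_k} S_k/\maxk_k \geq \spdim_S S/I$. It then remains to compute $\spdim_{S_k} \maxk_k$ and $\spdim_{S_k} S_k/\maxk_k$; these are well known: $\sdepth_{S_k} S_k/\maxk_k = 0$, giving $\spdim_{S_k} S_k/\maxk_k = k$, and $\sdepth_{S_k} \maxk_k = \lceil k/2 \rceil$ (Biro--Howard--Keller--Trotter--Young), giving $\spdim_{S_k} \maxk_k = k - \lceil k/2 \rceil = \lfloor k/2 \rfloor$. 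I should cite these directly.

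Next I would establish the lower bounds. For $\spdim_S S/I \geq 2$: since $k > 1$, the ideal $I$ is not principal, so $\pdim_S S/I \geq 2$; then the Stanley conjecture is not available, so instead I would argue directly that $\sdepth_S S/I \leq n - 2$ — equivalently, no Stanley decomposition of $S/I$ can have all Stanley spaces of the form $\lambda\KK[Z]$ with $|Z| \geq n-1$. This follows because $I$ has at least two minimal generators lying in distinct "coordinate directions": concretely, pick a surjective join-preserving map $\slat_I \to \slat_{I_2}$ onto the antichain-plus-top lattice on $k$ atoms as in \Cref{ex:main2} (which exists since any atomistic lattice on $k$ atoms surjects onto it), reducing to the case $I = (x^{k-1}, x^{k-2}y, \dots, y^{k-1}) \subset \KK[x,y]$, for which $\sdepth = 0$ is a direct check; then \Cref{thm:lcmmap} gives $\spdim_S S/I \geq \spdim_{\KK[x,y]} \KK[x,y]/I = 2$. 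Similarly for $\spdim_S I \geq 1$: $I$ is a proper nonzero ideal so $\sdepth_S I \leq n-1$ (a free module of rank one summand would force $I$ principal, or one invokes that $\depth_S I \le n-1$ since... actually the cleanest route is: $I$ has projective dimension $\geq 1$ over $S$ unless $I$ is principal, and for $k>1$ one again reduces via a surjective map to a small explicit example, or simply notes $\sdepth_S I \le n-1$ because $I \ne S$ and the $\ZZ^n$-graded structure forbids a Stanley space $\lambda\KK[X_1,\dots,X_n]$).

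Finally I would treat the complete intersection case, where I need both upper bounds to be attained. If $I = (f_1, \dots, f_k)$ is a monomial complete intersection, the $f_i$ are supported on pairwise disjoint sets of variables, so $\slat_I \cong \bool{k}$ — the lcm-semilattice is exactly the boolean semilattice, because the lcm of any subset is simply the product. Hence the map from \Cref{lem:free} is bijective, and the "moreover" clause of \Cref{thm:lcmmap} yields $\spdim_S I = \spdim_{S_k} \maxk_k = \lfloor k/2 \rfloor$ and $\spdim_S S/I = \spdim_{S_k} S_k/\maxk_k = k$. I expect the main obstacle to be the lower bound $\spdim_S S/I \geq 2$ (and $\spdim_S I \ge 1$): one cannot simply invoke $\pdim \le \spdim$ since that is the conjecture, so the argument must go through a concrete small example and \Cref{thm:lcmmap} in the reverse direction, and one must make sure the requisite surjective join-preserving map onto the minimal example genuinely exists for every atomistic lattice on $k \ge 2$ atoms — which it does, by sending all atoms to the $k$ atoms and everything else to the top, but this needs the target to be a genuine lcm-semilattice of a $2$-variable ideal, handled by the ideal $(x^{k-1}, \ldots, y^{k-1})$.
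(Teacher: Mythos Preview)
Your upper bounds and the complete intersection case match the paper's proof exactly: the surjective map $\bool{k}\to\slat_I$ from \Cref{lem:free}, \Cref{thm:lcmmap}, and the known values of $\sdepth \maxk_k$ from \cite{BHKTY} and $\sdepth S_k/\maxk_k=0$.

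For the lower bounds you diverge from the paper. Your argument for $\spdim_S S/I\ge 2$ via a surjective join-preserving map $\slat_I\to\slat_{(x,y)^{k-1}}$ and then \Cref{thm:lcmmap} is correct: the target lcm-semilattice really is the $k$-atom antichain plus a top, the map sending atoms bijectively to atoms and everything else to $\hat 1$ is join-preserving, and $\KK[x,y]/(x,y)^{k-1}$ is artinian so has $\sdepth=0$. The paper instead argues directly that any non-principal monomial ideal $I$ has an associated prime of height at least $2$ (pick $m\notin I$ and coprime $n_1,n_2$ with $mn_1,mn_2\in I$), and invokes the known bound $\sdepth S/I\le n-\operatorname{ht}\pp$ for $\pp$ associated. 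The paper's route is shorter and does not need to construct an auxiliary ideal or invoke \Cref{thm:lcmmap} a second time; yours has the virtue of staying entirely within the lcm-lattice framework and illustrates the paper's philosophy that both bounds flow from the same monotonicity principle. For $\spdim_S I\ge 1$ your final observation (a Stanley decomposition with all $|Z_i|=n$ forces $I$ principal, since any two full Stanley spaces $\lambda_1 S$, $\lambda_2 S$ overlap at $\lcm(\lambda_1,\lambda_2)$) is exactly what the paper asserts in one line. One minor point: there is no ``\texttt{ex:main2}'' in the paper; you presumably mean the ideal $I_2$ appearing in Example~4.14.
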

The assumption that $k >1$ was introduced in order to avoid the zero module $S/S$.
The upper bound for $\spdim_S S/I$ was originally proven by Cimpoea\c{s} \cite[Prop.~1.2]{Ci} and the upper bound for $\spdim_S I$ was originally proven by Okazaki \cite{O}, resp. in the squarefree case by Keller and Young \cite{KY}.
For a complete intersection $I$ the Stanley depth of  $I$  was originally determined by Shen \cite{S} and of $S/I$ by Rauf \cite{R2}.
\begin{proof}[Proof~of~Proposition~\ref{prop:boundI}.]
	The Stanley depth of the maximal ideal $\maxk_k$ was computed by Bir{\'o} et al.~in \cite{BHKTY}. Moreover, the Stanley depth of $\KK=S_k / \maxk_k$ is zero. Thus the values of the upper bounds are known.
	The inequalities $\spdim_S I \leq \spdim_{S_k} \maxk_k$ and $\spdim_S S/I \leq \spdim_{S_k} S/\maxk_k$ follow from Theorem \ref{thm:lcmmap}, since by Remark \ref{lem:free} there exists an \nice{} map $\slat_{\maxk_k} = \bool{k} \to L_I$.

	If $I$ is a complete intersection, then $\slat_I \cong \slat_{\maxk_k}$, therefore $\spdim_S I = \spdim_{S_k} \maxk_k$ and $\spdim_S S/I = \spdim_{S_k} S/\maxk_k$ by Theorem \ref{thm:lcmmap}.

	For the lower bound, note that every ideal in $n$ variables with more than one minimal generator has Stanley depth less than $n$.
	Moreover, if $I$ has at least $2$ generators, then there exists a monomial $m$ in $S \setminus I$ and two coprime monomials $n_1, n_2 \in S$ such that $m n_1, m n_2 \in I$.
 This implies that $S/I$ has an associated prime of height at least $2$ and thus the Stanley depth of $S/I$ is at most $n - 2$.
\end{proof}

In the next result we characterize the case of equality for the upper bound in part (2) of Proposition \ref{prop:boundI}.
Recall that monomial complete intersections can be characterized as those ideals $I$ whose number of generators equals the projective dimension of $S/I$.
In this sense, the last sentence of the following theorem extends the result on the Stanley depth of complete intersections \cite{S}.
\begin{theorem}
	Let $I \subset S$ be a monomial ideal with $k > 1$ minimal generators. Then the following are equivalent:
	\begin{enumerate}
		\item $\slat_I \cong \bool{k}$, i.e. $I$ has the lcm-lattice of a complete intersection.
		\item $\pdim_S S/I = k$.
		\item $\spdim_S S/I = k$.
	\end{enumerate}
	Moreover, if $\pdim_S S/I = k-1$ then both $S/I$ and $I$ satisfy Stanley's inequality \eqref{conj:stanley}.
\end{theorem}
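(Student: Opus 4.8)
The plan is to prove the equivalence $(1)\Leftrightarrow(2)\Leftrightarrow(3)$ first, and then the final assertion about $\pdim_S S/I = k-1$. For the equivalences, I would set up a cycle of implications. The implication $(1)\Rightarrow(2)$ is essentially classical: if $\slat_I \cong \bool{k}$, then by \Cref{thm:lcmdepth} (with $J = J' = (0)$) we get $\pdim_S S/I = \pdim_{S_k} S_k/\maxk_k = k$, since the Koszul complex shows the maximal ideal on $k$ variables has projective dimension $k$. Similarly $(1)\Rightarrow(3)$ follows from \Cref{thm:lcmmap} together with the computation $\spdim_{S_k} S_k/\maxk_k = k$ recalled in the proof of \Cref{prop:boundI}. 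So the content is in the reverse directions $(2)\Rightarrow(1)$ and $(3)\Rightarrow(1)$.

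For $(2)\Rightarrow(1)$, I would argue contrapositively: suppose $\slat_I \not\cong \bool{k}$. By \Cref{lem:free} there is a surjective join-preserving map $\phi:\bool{k}\to\slat_I$, and since the two lattices are not isomorphic, $\phi$ is not bijective. By part (1) of \Cref{lem:factor}, $\phi$ factors through some quotient $\bool{k}/\sim_a$ for a meet-irreducible $a$; since $\bool{k}$ is atomistic and $\slat_I$ has $k$ atoms (so $\phi$ is a bijection on atoms), $a$ cannot be an atom, hence by \Cref{lem:homom} the quotient $\bool{k}/\sim_a$ is again atomistic on $k$ atoms, and it has strictly fewer elements than $\bool{k}$. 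The point is that $\bool{k}/\sim_a$ is the lcm-semilattice of some ideal with $k$ generators (using \Cref{prop:realize} or directly \Cref{thm:lcmgeneral}), and by \Cref{thm:lcmdepth} we get $\pdim_S S/I \le \pdim_{S''} S''/I'' \le k$ for that intermediate ideal $I''$; I then need to show this intermediate bound is strictly less than $k$. Here I would invoke the combinatorial fact that for an atomistic lattice on $k$ atoms, having projective dimension exactly $k$ forces the lattice to be boolean — this is exactly the rigidity statement in \cite[Theorem 3.3]{GPW} applied to the Taylor complex: the Taylor complex $T_{G(I)}$ has length $k$, and it is a minimal resolution (giving $\pdim = k$) precisely when no cancellation occurs, which for an atomistic lattice happens iff the lattice is $\bool{k}$. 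So $\pdim_S S/I < k$, proving the contrapositive.

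For $(3)\Rightarrow(1)$, the same contrapositive scheme works verbatim with \Cref{thm:lcmmap} in place of \Cref{thm:lcmdepth}: if $\slat_I\not\cong\bool{k}$, then $\slat_I$ is a surjective join-preserving image of a proper atomistic quotient $\bool{k}/\sim_a$, and I must show $\spdim_{S''} S''/I'' < k$ for the corresponding ideal $I''$. Since $I''$ has $k$ generators but is not a complete intersection, it is not "rigid" in the above sense, and one expects $\spdim < k$; concretely I would bound $\spdim_{S''} S''/I''$ using that $\pdim_{S''} S''/I'' < k$ together with an inductive or direct argument. Actually the cleanest route: once $(2)\Leftrightarrow(1)$ is established, to get $(3)\Rightarrow(1)$ I only need $(3)\Rightarrow(2)$, i.e. $\spdim_S S/I = k \Rightarrow \pdim_S S/I = k$ — but this is false in general (Stanley depth can be smaller than depth is conjectured not to happen, but the reverse... ), so instead I prove $\spdim_S S/I = k \Rightarrow \slat_I\cong\bool{k}$ directly by the quotient argument, needing the fact that $\spdim < k$ for any non-boolean atomistic lattice on $k$ atoms. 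I expect \textbf{this last fact — that $\spdim_{S} S/I < k$ whenever $I$ has $k$ generators but is not a complete intersection — to be the main obstacle}, since unlike for $\pdim$ we have no direct resolution-theoretic handle; I would try to prove it by induction on $k$ using \Cref{thm:lcmmap} applied to the further quotients $\bool{k}/\sim_a$, or by exhibiting an explicit Stanley decomposition of lower projective dimension in the extremal intermediate case where $\slat_I$ is obtained from $\bool{k}$ by a single identification.

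Finally, for the assertion that $\pdim_S S/I = k-1$ implies the Stanley conjecture holds for both $S/I$ and $I$: by the (just proved) equivalence, $\pdim_S S/I = k-1 < k$ means $\slat_I\not\cong\bool{k}$, so $\slat_I$ is a proper atomistic quotient-image, and by the rigidity argument above one in fact checks that $\slat_I \cong \bool{k}/\sim_a$ for a \emph{single} identification of a co-atom pair (this is what "one cancellation in the Taylor complex" means combinatorially when $\pdim$ drops by exactly one). For such an $I$, $S/I$ is a "near-complete intersection" and its minimal free resolution is explicitly understood; one can write down an explicit Stanley decomposition of $S/I$ and of $I$ realizing $\spdim_S S/I \le k-1$ and $\spdim_S I \le \lfloor k/2\rfloor$ — alternatively, invoke \Cref{thm:lcmmap} to reduce to the single lattice $\bool{k}/\sim_a$ and verify the conjecture there by hand. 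Combined with $\depth_S S/I = n - \pdim_S S/I = n-(k-1)$ and the known lower bound $\sdepth_S S/I \ge n - (k-1)$ coming from the same explicit decomposition, the conjecture follows; the case of $I$ is handled analogously using $\pdim_S I = \pdim_S S/I - 1 = k - 2$ and \Cref{prop:boundI}.
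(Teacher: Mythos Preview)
Your overall architecture is right, and you correctly isolate the crux: showing $\spdim_S S/I \leq k-1$ whenever $\slat_I \not\cong \bool{k}$. But you do not actually close this gap, and the paper's solution is a single clean observation you are missing. Once one has factored the surjection $\bool{k}\twoheadrightarrow \slat_I$ through $\bool{k}/\!\sim_a$ via \Cref{lem:factor}, note that the automorphism group of $\bool{k}$ acts transitively on its meet-irreducible elements, so $\bool{k}/\!\sim_a$ is \emph{independent of $a$}. The paper then exhibits a concrete monomial ideal with this lcm-semilattice, namely
\[
J \;=\; (x_1^2,\dotsc,x_{k-1}^2,\,x_1x_2\cdots x_{k-1}) \;\subset\; \KK[x_1,\dotsc,x_{k-1}],
\]
which lives in only $k-1$ variables. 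Hence $\spdim S'/J \leq k-1$ and $\pdim S'/J \leq k-1$ hold \emph{trivially}, and \Cref{thm:lcmmap} and \Cref{thm:lcmdepth} push both bounds down to $S/I$. This single construction handles $(2)\Rightarrow(1)$ and $(3)\Rightarrow(1)$ uniformly, with no need for your Taylor-complex minimality argument (which, incidentally, is correct for $(2)\Rightarrow(1)$ but of no help for the Stanley side).

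For the final assertion your plan contains an actual error. You claim that $\pdim_S S/I = k-1$ forces $\slat_I \cong \bool{k}/\!\sim_a$ (``one cancellation''). This is false: for instance $J' = (x_1^2,x_2^2,x_3^2,x_1x_2)\subset\KK[x_1,x_2,x_3]$ has four generators, $\pdim S'/J' = 3 = k-1$, yet its lcm-semilattice has $13$ elements while $\bool{4}/\!\sim_a$ has $14$. Fortunately you do not need this: the paper only uses that $\slat_I$ is a \emph{surjective image} of $\bool{k}/\!\sim_a$, which already gives $\spdim_S S/I \leq \spdim S'/J \leq k-1 = \pdim_S S/I$. For $I$ itself one invokes $\sdepth_{S'} J > 0$ (this is \cite[Theorem~27]{H}), whence $\spdim_S I \leq \spdim_{S'} J \leq k-2 = \pdim_S I$. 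Your ``alternatively, reduce to $\bool{k}/\!\sim_a$ and verify by hand'' is exactly the right instinct; the missing piece is the realization of that lattice in $k-1$ variables.
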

\begin{proof}
	If $\slat_I \cong \bool{k}$ then $\spdim_S S/I = k$ by Proposition \ref{prop:boundI}.
	Moreover, in this situation $\pdim_S S/I = k$ because this is the projective dimension of a $k$-generated complete intersection.
	So we need to show that $\slat_I \ncong \bool{k}$ implies that $\spdim_S S/I \leq k-1$ and $\pdim_S S/I \leq k-1$.
	
	By Remark \ref{lem:free} there exists an \nice{} map $\bool{k} \to \slat_I$.
	As $\slat_I \ncong \bool{k}$ this map is not injective, so by Lemma \ref{lem:factor} it factors through $\bool{k} / \sim_a$ for some meet-irreducible element $a \in \bool{k}$.
	But the automorphism group of $\bool{k}$ acts transitively on the set of meet-irreducible elements, so $\slat := \bool{k} / \sim_a$ does not depend on $a$.
	If $J$ is a monomial ideal (in some polynomial ring $S'$) whose lcm-lattice equals $\slat$, then
	the Theorems \ref{thm:lcmmap} and \ref{thm:lcmdepth} imply that it suffices to prove $\spdim_{S'} S'/J \leq k-1$ and $\pdim_{S'} S'/J \leq k-1$.
	
	We claim that we can choose
	\[
		J = \<x_1^2, \dotsc, x_{k-1}^2, x_1 x_2 \dotsm x_{k-1}\> \subset \KK[x_1,\dotsc,x_{k-1}].
	\]
	To see this, let us identify each element of $\bool{k}$ by the set of atoms below it.
	Then---up to an automorphism---we have $a = \set{1, \dotsc, k-1}$.
	The meet-irreducible elements of $\slat$ are the $(k-1)$-subsets of $[k] := \set{1, \dotsc,k}$ other than $a$, and the $(k-2)$-subsets of $a$.
	
	We can choose a labeling $w$ as follows:
	Set $w([k] \setminus \{i\}) = w(a \setminus \{i\}) = x_i$ and all other elements of $L$ are mapped to $1$.
	This labeling is admissible, so by Theorem \ref{thm:lcmgeneral} the corresponding ideal has the desired lcm-lattice. Moreover, it is easy to see that the generators of this ideal are as claimed.
	Here, $x_i^2$ corresponds to the atom $\set{i}$ for $1 \leq i \leq k-1$ and $x_1 x_2 \dotsm x_{k-1}$ corresponds to $\set{k}$.
	
	Note that $J$ is a monomial ideal in $k-1$ variables, which implies that the claimed inequalities $\spdim_{S'} S'/J \leq k-1$ and $\pdim_{S'} S'/J \leq k-1$ hold trivially.
	
	Now we turn to the last statement of the theorem. Assume that $\pdim_S S/I = k-1$.
	We already showed that this implies that $\spdim_S S/I \neq k$ and thus $\spdim_S S/I \leq k - 1 = \pdim_S S/I$.
	Further, it holds that $\pdim_S I = k-2$, and by the argument above it suffices to show that $\pdim_{S'} J \leq k-2$.
	For this, we note that \cite[Theorem 27]{H} implies that $\sdepth_{S'} J > 0$ and hence $\spdim_{S'} J \leq k-2$.
\end{proof}

\subsection{Deformations of monomial ideals}\label{ssec:deform}
The notion of \emph{deformation} of a monomial ideal was introduced by Bayer et al.~\cite{BPS} and further developed in Miller et al.~\cite{MSY}.
In order to include the case of quotients $I/J$, we slightly extend the definition found in \cite{MSY}. Recall that $\ord{j}{m}$ denotes the exponent of $X_j$ in a monomial $m \in \Mon{S}$.
\begin{definition} (1) Let $G \subset \Mon{S}$ be a finite set of monomials.
	A \emph{deformation} of $G$ is a set of vectors $\ve_g = (\ve^g_1, \dotsc, \ve^g_n) \in \NN^n$ for $g \in G$
	subject to the following conditions:
	\begin{equation}\label{eq:deform}
	\begin{aligned}
	\ord{j}{g} > \ord{j}{h} &\implies \ord{j}{g} + \ve^g_j > \ord{j}{h} + \ve^h_j \quad\text{ and }\\
	\ord{j}{g} = 0 &\implies \ve^g_j = 0.
	\end{aligned}
	\end{equation}
	
	(2) Let $J \subsetneq I \subset S$ be two monomial ideals with (not necessarily minimal) generating sets $G_I$ and $G_J$.
	Let further $\ve$ be a deformation of the union $G_I \cup G_J$.
	We set $G_I(\ve) := \set{g\cdot \xf^{\ve_g} \with g \in G_I}$ and $G_J(\ve)$ is defined analogously.
	Then we call the two ideals $I_\ve := \<G_I(\ve)\>$ and $J_\ve := \<G_J(\ve)\>$ a \emph{common deformation} of $I$ and $J$.
	Note that the condition \eqref{eq:deform} implies that $J_\ve \subsetneq I_\ve$.
\end{definition}

\begin{proposition}\label{prop:gen}
	Let $J \subsetneq I \subset S$ be two monomial ideals and let $\gen{J} \subsetneq \gen{I} \subset S$ be a common deformation of $I$ and $J$. Then
	$\sdepth_S I/J \geq \sdepth_S \gen{I} / \gen{J}$
	and the same holds for the usual depth.
\end{proposition}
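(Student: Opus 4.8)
The plan is to exhibit $(\gen I,\gen J)$ as a refinement of $(I,J)$ at the level of lcm-semilattices and then to quote \Cref{thm:lcmmap}. Write $\widetilde{G_I}:=\set{g\,\xf^{\ve_g}\with g\in G_I}$ and $\widetilde{G_J}:=\set{g\,\xf^{\ve_g}\with g\in G_J}$ for the deformed generating sets, so that $\gen I=(\widetilde{G_I})$, $\gen J=(\widetilde{G_J})$, and $\gen J\subsetneq\gen I$ by \eqref{eq:deform}. I would then build a surjective join-preserving map $\delta\colon\slat_{\widetilde{G_I}\cup\widetilde{G_J}}\longrightarrow\slat_{G_I\cup G_J}$ that maps $\slat_{\widetilde{G_J}}$ onto $\slat_{G_J}$, by setting $\delta(g\,\xf^{\ve_g}):=g$ on atoms and extending via $\delta\bigl(\lcm_{i\in A}(g_i\,\xf^{\ve_{g_i}})\bigr):=\lcm_{i\in A}g_i$ for every nonempty index set $A$. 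Granting that $\delta$ is well defined, \Cref{thm:lcmmap} applied with the deformed pair in the role of the unprimed pair yields $\spdim_S\gen I/\gen J\geq\spdim_S I/J$; since both modules live over the same ring $S$ with $n$ variables, the substitution $\spdim=n-\sdepth$ converts this into $\sdepth_S I/J\geq\sdepth_S\gen I/\gen J$, which is the claim.

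The step I expect to be the only genuine obstacle is the well-definedness of $\delta$, i.e.\ that $\lcm_{i\in A}g_i$ depends only on the monomial $\lcm_{i\in A}(g_i\,\xf^{\ve_{g_i}})$ and not on the set $A$ chosen to represent it. This is where the defining inequality of a deformation enters, coordinate by coordinate. Fix $j$ and abbreviate $b^i_j:=a^i_j+\ve^i_j$; the first implication in \eqref{eq:deform} reads $a^i_j>a^k_j\Rightarrow b^i_j>b^k_j$, whose contrapositive is $b^i_j\leq b^k_j\Rightarrow a^i_j\leq a^k_j$. Suppose $\max_{i\in A}b^i_j=\max_{i\in B}b^i_j$ for all $j$; choosing $i_0\in A$ and $i_1\in B$ attaining this common value, one has $b^i_j\leq b^{i_0}_j=b^{i_1}_j$ for every $i\in A\cup B$, and the contrapositive implication then gives $\max_{i\in A}a^i_j=a^{i_0}_j=a^{i_1}_j=\max_{i\in B}a^i_j$. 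As this holds in every coordinate, equality of the deformed lcms forces equality of the undeformed lcms, so $\delta$ is well defined on $\slat_{\widetilde{G_I}\cup\widetilde{G_J}}$.

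The remaining points are routine, and I would dispatch them quickly: $\delta$ is join-preserving because a join $\lcm_{i\in A}(\cdots)\vee\lcm_{i\in B}(\cdots)=\lcm_{i\in A\cup B}(\cdots)$ is carried to the corresponding identity among the $g_i$; $\delta$ is surjective because every element of $\slat_{G_I\cup G_J}$ is an lcm of generators, hence lies in the image; and $\delta(\slat_{\widetilde{G_J}})=\slat_{G_J}$ by the same reasoning restricted to the generators of $J$. Feeding these facts into \Cref{thm:lcmmap} finishes the proof. If one prefers to avoid checking well-definedness directly, an alternative is to deform a single generator at a time, each elementary step being visibly compatible with taking least common multiples, and to compose the resulting maps; but the direct construction above is shorter.
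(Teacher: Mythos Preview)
Your proof is correct and follows essentially the same route as the paper: both construct the map $\delta$ sending each deformed generator $g\,\xf^{\ve_g}$ back to $g$, observe that it induces a surjective join-preserving map of lcm-semilattices respecting the $J$-part, and conclude via \Cref{thm:lcmmap}. The paper simply attributes the well-definedness of this map to \cite{GPW}, whereas you have spelled out the coordinate-wise argument using the contrapositive of \eqref{eq:deform}; your verification is accurate and is exactly the content behind that citation.
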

\begin{proof}
	As noticed in \cite{GPW}, the map sending each deformed monomial $g \cdot x^{\ve_g}$ to the corresponding original monomial $g$ induces an \nice{} map $\LCMl{\LCMs{G_I(\ve) \cup G_J(\ve)}} \to \LCMl{\LCMs{G_I \cup G_J}}$, so the claim follows from Theorem \ref{thm:lcmmap} and Theorem \ref{thm:lcmdepth}, respectively.
\end{proof}

The most important deformations are the generic deformations. Let us recall the definition from \cite{MSY}.
\begin{definition} (1) A monomial $m \in S$ is said to \emph{strictly divide} another monomial $m' \in S$ if $m \mid \frac{m'}{x_i}$ for each variable $x_i$ dividing $m'$.

(2) A monomial ideal $I \subset S$ is called \emph{generic} if for any two minimal generators $m,m'$ of $I$ having the same degree in some variable, there exists a third minimal generator $m''$ that strictly divides $\lcm(m,m')$.

(3) A deformation of a monomial ideal $I$ is called \emph{generic} if the deformed ideal $\gen{I}$ is generic.
\end{definition}

\begin{corollary}
	If $I \subset S$ is a monomial ideal such that $\depth_S S/I = \depth_S S/\gen{I}$ for some generic deformation of $I$, then $\sdepth_S S/I \geq \depth_S S/I$ (i.~e.~Stanley conjecture holds for $S/I$).
\end{corollary}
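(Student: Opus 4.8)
The plan is to obtain the statement as a short consequence of \Cref{prop:gen} together with the fact that the Stanley conjecture is already known for \emph{generic} monomial ideals; the good behaviour of $S/\gen{I}$ is transported back to $S/I$ along the deformation, and the hypothesis is exactly what guarantees that nothing is lost on the depth side.

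I would argue in three steps. First, apply \Cref{prop:gen} with $S$ in the role of the larger ideal and $I$ in the role of the smaller one (the unique generator $1_\KK$ of $S$ is fixed by every deformation, so the given generic deformation of $I$ is in particular a common deformation of $S$ and $I$); this yields
\[ \sdepth_S S/I \;\geq\; \sdepth_S S/\gen{I}. \]
Second, since the deformation is generic the ideal $\gen{I}$ is a generic monomial ideal, and for quotients by generic monomial ideals the Stanley conjecture is known (Apel \cite{A2}), so $\sdepth_S S/\gen{I} \geq \depth_S S/\gen{I}$. Third, the hypothesis provides $\depth_S S/\gen{I} = \depth_S S/I$. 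Concatenating the three relations gives $\sdepth_S S/I \geq \depth_S S/I$, as desired.

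There is essentially no internal obstacle once \Cref{prop:gen} is in hand; the points worth attention are external. One should check that the invoked form of the Stanley conjecture for generic ideals is the one for $S/I$ rather than only for $I$; if only the latter is available, the identical argument with $I$ in place of $S/I$ (again via \Cref{prop:gen}) still goes through. The actual content of the hypothesis is the real subtlety: a generic deformation of $I$ always exists by \cite{MSY}, but it may strictly lower the depth of the quotient, so the assumption $\depth_S S/I = \depth_S S/\gen{I}$ is a genuine restriction on $I$ --- which is why the corollary is stated conditionally.
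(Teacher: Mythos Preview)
Your proof is correct and follows essentially the same approach as the paper: invoke Apel's result that the Stanley conjecture holds for $S/J$ when $J$ is generic, combine it with \Cref{prop:gen} to get $\sdepth_S S/I \geq \sdepth_S S/\gen{I} \geq \depth_S S/\gen{I}$, and close the chain with the hypothesis. Your additional remarks (that $1_\KK$ is fixed by any deformation, and that the depth equality is a genuine restriction) are accurate and add helpful context.
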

\begin{proof}
	It was proven by Apel in \cite{A2} that $\sdepth_S S/J \geq \depth_S S/J$ for every generic monomial ideal $J$. So the claim follows from Proposition \ref{prop:gen} by considering the generic deformation of $I$.
\end{proof}

\subsection{Colon ideals and associated primes}
In this subsection we consider colon ideals with respect to monomials.
Both results of this section can be proven directly, but we would like to illustrate that they also follow from our main result.
Moreover, our proof works uniformly for both depth and Stanley depth.
The first result was originally proven by Seyed Fakhari in \cite[Proposition 2.5]{SF}.

\begin{proposition}\label{prop:colon}
	Let $J \subsetIJ I \subset S$ be two monomial ideals and let $v \in \Mon{S}$ be a monomial.
	Then
	\[ \spdim_S I/J \geq \spdim (I:v)/(J:v) \]
	and the same holds for the projective dimension.
\end{proposition}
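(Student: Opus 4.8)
The plan is to exhibit a surjective join-preserving map of lcm-semilattices and then apply \Cref{thm:lcmmap} (together with \Cref{thm:lcmdepth} for the projective dimension), along the same lines as in the proof of \Cref{prop:gen}. The relevant map is the map on monomials $\mu\colon S\longrightarrow S$ defined by $\mu(m)\defa \lcm(m,v)/v = m/\gcd(m,v)$. The first thing I would verify is that $\mu$ preserves least common multiples: for monomials $m_1,m_2$ and any variable $X_i$, setting $a\defa\ord{i}{m_1}$, $b\defa\ord{i}{m_2}$ and $c\defa\ord{i}{v}$, one computes that both $\ord{i}{\mu(\lcm(m_1,m_2))}$ and $\ord{i}{\lcm(\mu(m_1),\mu(m_2))}$ equal $\max(a-c,\,b-c,\,0)$. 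I would also recall the elementary fact that $(m:v)=(\mu(m))$ for a single monomial $m$, so that if $G_I$ and $G_J$ are the minimal generating sets of $I$ and $J$, then $\mu(G_I)$ generates $(I:v)$ and $\mu(G_J)$ generates $(J:v)$.

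With this in hand the argument is short. If $(J:v)=(I:v)$ then $(I:v)/(J:v)=0$ and there is nothing to prove, so assume $(J:v)\subsetneq(I:v)$. Set $G_I'\defa\mu(G_I)$ and $G_J'\defa\mu(G_J)$; these are generating sets (not necessarily minimal) of $(I:v)$ and $(J:v)$. Since $\mu$ preserves the lcm, it restricts to a join-preserving map $\delta\colon\slat_{G_I\cup G_J}\longrightarrow\slat_{G_I'\cup G_J'}$ of the lcm-semilattices. This $\delta$ is surjective: every element of $\slat_{G_I'\cup G_J'}$ is the lcm of some non-empty subset of $\mu(G_I\cup G_J)$, hence is the image under $\mu$ of the lcm of the corresponding subset of $G_I\cup G_J$. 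The same reasoning applied to $G_J$ shows that $\delta$ maps $\slat_{G_J}$ surjectively onto $\slat_{G_J'}$. Thus the hypotheses of \Cref{thm:lcmmap} are satisfied and we obtain $\spdim_S I/J\geq\spdim_S (I:v)/(J:v)$; feeding the same data into \Cref{thm:lcmdepth} gives the corresponding statement for $\pdim$.

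I do not expect any serious obstacle. The few points that need a little care are: the exponent-wise check that $\mu$ respects the lcm; the exclusion of the degenerate case $(J:v)=(I:v)$, which is necessary because \Cref{thm:lcmmap} presupposes a strict inclusion; and the fact that $\mu$ may fail to be injective on $G_I\cup G_J$ and may even send a generator dividing $v$ to $1_\KK$ (so that $(I:v)=S$). None of these cause trouble, since \Cref{thm:lcmmap} permits arbitrary generating sets for the four ideals involved and imposes no atomistic hypothesis on the lcm-semilattices.
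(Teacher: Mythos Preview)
Your proof is correct and follows essentially the same route as the paper: both define the map $m\mapsto\lcm(m,v)/v$ on monomials, check that it preserves least common multiples, observe that it sends generating sets of $I$ and $J$ to generating sets of $(I:v)$ and $(J:v)$, and then invoke \Cref{thm:lcmmap} and \Cref{thm:lcmdepth}. Your version is in fact slightly more careful than the paper's, in that you explicitly exclude the degenerate case $(I:v)=(J:v)$ and spell out the exponent-wise verification.
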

\begin{proof}

Let $L := \LCMs{G(I) \cup G(J)}$ and $L' := \LCMs{G(J)}$.
We consider the map
\[ \delta': L \rightarrow \Mon{S},  \quad m \mapsto \frac{m \vee v}{v}. \]
It is easy to see that $\delta'$ preserves the join.
We will show that the image $\delta'(L)$ generates $I:v$ and similarly $\delta'(L')$ generates $J:v$.
Then we can extend $\delta'$ to an \nice{} map $\delta: \LCMl{L} \to \LCMl{\delta'(L)}$ by setting $\delta(\hat{0}) := \hat{0}$, so our claim follows from Theorem \ref{thm:lcmmap} (resp.~Theorem \ref{thm:lcmdepth}).

By symmetry, we only consider $L$. It is clear that $\delta'(L) \subseteq I:v$. For the other inclusion consider a monomial $m \in I:v$. Then there exists a generator $g$ of $I$ such that $g \mid v m$. Hence $g \vee v \mid v m$ and thus $\delta'(g) \mid m$. So $I:v$ is contained in the ideal generated by the image of $\delta'$.
\end{proof}

As a consequence, we get the well-known bound on the depth and Stanley depth in terms of the height of associated prime ideals, see \cite[Theorem 9]{H}.

\newcommand{\pp}{\mathfrak{p}}
\begin{corollary}\label{cor:ass}
Let $I \subset S$ be a monomial ideal. If $I$ has an associated prime $\pp \subset S$ of height $p$, then
\begin{align*}
\spdim_S S/I, \pdim_S S/I &\geq p;\\
\spdim_S I &\geq \lfloor\frac{p}{2}\rfloor;\\
\pdim_S I &\geq p-1.
\end{align*}
\end{corollary}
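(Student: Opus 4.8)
The plan is to reduce all four inequalities to \Cref{prop:colon} applied to a suitable colon ideal, together with the known invariants of a monomial prime. Recall the standard fact (see e.g.\ Herzog and Hibi \cite{HH}) that every associated prime of a monomial ideal is itself a monomial ideal, and that each associated prime $\pp$ of $S/I$ has the form $\pp = (I:v)$ for some monomial $v \in S$. Since $\pp$ has height $p$, it is generated by $p$ of the variables; in particular $\pp$ is a monomial complete intersection with $p$ generators. One may discard the degenerate case $I = (0)$, where necessarily $p = 0$ and there is nothing to prove, and thus assume $I \subsetneq S$.

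For the statements about $S/I$ I would apply \Cref{prop:colon} with $(I,J) := (S,I)$ and the monomial $v$ just chosen. Since $(S:v) = S$, this gives
\[ \spdim_S S/I \;\geq\; \spdim_S (S:v)/(I:v) \;=\; \spdim_S S/\pp, \]
and likewise $\pdim_S S/I \geq \pdim_S S/\pp$. As $\pp$ is generated by $p$ variables, $S/\pp$ is resolved by a Koszul complex of length $p$, so $\pdim_S S/\pp = p$; and since $\pp$ is a complete intersection on $p$ generators, \Cref{prop:boundI} gives $\spdim_S S/\pp = p$ (the case $p \le 1$, where $S/\pp$ is a polynomial ring, being immediate). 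This yields the first line of the corollary.

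For the statements about $I$ itself I would instead apply \Cref{prop:colon} with $(I,J) := (I,(0))$; since $((0):v) = (0)$, this gives
\[ \spdim_S I \;\geq\; \spdim_S (I:v)/((0):v) \;=\; \spdim_S \pp \]
and $\pdim_S I \geq \pdim_S \pp$. The Koszul resolution of $S/\pp$ restricts to a free resolution of $\pp$ of length $p-1$, hence $\pdim_S \pp = p-1$; and since $\pp$ is a complete intersection on $p$ generators, \Cref{prop:boundI} gives $\spdim_S \pp = \lfloor p/2 \rfloor$. This proves the remaining two inequalities. There is no genuine obstacle once \Cref{prop:colon} and \Cref{prop:boundI} are in hand; the only points that call for a little care are the bookkeeping in the two applications of \Cref{prop:colon} — choosing $I$ resp.\ $S$ as the larger ideal — and the small-$p$ edge cases, where one should check directly that $S/\pp$ and $\pp$ still carry the claimed projective and Stanley projective dimensions.
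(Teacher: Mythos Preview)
Your proof is correct and follows essentially the same approach as the paper: the paper's proof is a one-line remark that the result ``follows from the foregoing proposition, given the known values of $\spdim$ and $\pdim$ for monomial prime ideals,'' and you have simply spelled out those details --- the two applications of \Cref{prop:colon} with $(S,I)$ and $(I,(0))$, together with the invocation of \Cref{prop:boundI} and the Koszul complex for the invariants of $\pp$. The edge-case bookkeeping you include is a harmless refinement of what the paper leaves implicit.
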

\begin{proof}
This follows from the foregoing proposition, given the known values of $\spdim$ and $\pdim$ for monomial prime ideals.
\end{proof}

From the proof of Proposition \ref{prop:colon} one can also extract the following lattice-theoretical statement, which might be of independent interest. As we do not use it, we omit the proof.
\begin{proposition}
	Let $L$ be a finite atomistic lattice and let $p \in \NN$. The following are equivalent:
	\begin{enumerate}
		\item There exists an \nice{} map $L \rightarrow \bool{p}$ onto the boolean lattice on $p$ generators.
		\item There exists a monomial ideal $I$ with $L \cong L_{I}$ and $I$ has an associated prime of height $p$.
	\end{enumerate}
\end{proposition}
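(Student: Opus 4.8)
The plan is to prove the two implications of the equivalence separately. For \emph{(2)$\Rightarrow$(1)}, suppose $I$ is a monomial ideal with $L\cong L_I$ and with an associated prime $\pp$ of height $p$. Being an associated prime of a monomial ideal, $\pp$ is a monomial prime, hence generated by $p$ distinct variables; choose a monomial $v$ with $(I:v)=\pp$. The map $\phi\colon L\cong L_I\to L_\infty$, $\;m\mapsto(m\vee v)/v$, from the proof of \Cref{prop:colon} is join-preserving and its image generates $I:v=\pp$; in particular $\phi(L)$ contains the $p$ minimal generators of $\pp$. I post-compose $\phi$ with the map $\rho\colon L_\infty\to\sbool{p}$ sending a monomial $u$ to the product of those variables of $\pp$ that divide $u$. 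Since a variable of $\pp$ divides $\lcm(u_1,u_2)$ exactly when it divides $u_1$ or $u_2$, the map $\rho$ preserves joins. As every element of $\phi(L)$ lies in $\pp$, the composite $\rho\circ\phi$ takes values in $\bool p$; being join-preserving with image containing all atoms of $\bool p$ (the images of the minimal generators of $\pp$), it is surjective. This is the desired map.

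For \emph{(1)$\Rightarrow$(2)}, let $\psi\colon L\to\bool p$ be surjective and join-preserving. The case $p\le 1$ is handled directly (passing from any realization $I_1$ of $L$ to $YI_1$, in one more variable, gives $L_{YI_1}\cong L$ and exhibits $(Y)$ as an associated prime of height $1$), so assume $p\ge 2$. Identifying $\sbool{p}$ with the power set of $[p]$, let $c_j=[p]\setminus\{j\}$ be the coatoms and set $M_j:=\pinv{\psi}(c_j)\in L$. From part (3) of \Cref{lem:pseudoinverse}, for $x\in L$ one has $x\le\bigwedge_{j\in T}M_j$ iff $\psi(x)\le\bigwedge_{j\in T}c_j$; applying this with $T=[p]$ (where $\bigwedge_j c_j=\hat 0\notin\bool p$) shows $\bigwedge_{j=1}^{p}M_j=\hat 0_{\overline{L}}$, and with $T=[p]\setminus\{j\}$ (where $\bigwedge_{k\ne j}c_k=\{j\}$) shows $\bigwedge_{k\ne j}M_k=\pinv{\psi}(\{j\})$ is a nonzero element of $L$; in particular the $M_j$ are pairwise incomparable. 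Now I realize $L$ by a map $w$ as in \Cref{thm:lcmgeneral}: introduce a variable $Y_j$ for each $M_j$ and a variable $X_Q$ for each meet-irreducible $Q$ of $L$ not among the $M_j$, and set $w(M_j)=Y_j$, $w(Q)=X_Q$, and $w(N)=1_\KK$ for every other $N$. One checks conditions (a) and (b) of \Cref{thm:lcmgeneral}: distinct relevant elements receive distinct variables, every meet-irreducible receives a variable, and $w(\hat 1_L)=1_\KK$. Let $I$ be the associated monomial ideal, so that $L_I\cong L$.

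It then remains to produce a monomial $v$ with $(I:v)$ a prime of height $p$. Put $v:=\prod_Q X_Q$, the product over the $X$-variables only. By the formula \eqref{eq:lcmgen}, the generator of $I$ attached to an atom $A$ of $L$ is $m_A=\bigl(\prod_{Q\not\ge A}X_Q\bigr)\bigl(\prod_{j\colon M_j\not\ge A}Y_j\bigr)$, hence $m_A/\gcd(m_A,v)=\prod_{j\colon M_j\not\ge A}Y_j$, and $I:v$ is generated by these products. Since $A\not\le\hat 0=\bigwedge_j M_j$, each such product is a nonempty product of $Y_j$'s, so $I:v\subseteq(Y_1,\dots,Y_p)$. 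Conversely, fix $j$: as $L$ is atomistic, the nonzero element $\bigwedge_{k\ne j}M_k$ lies above some atom $A$, which then satisfies $M_k\ge A$ for all $k\ne j$ and $M_j\not\ge A$ (otherwise $A\le\bigwedge_k M_k=\hat 0$); for this $A$ one gets $m_A/\gcd(m_A,v)=Y_j$, so $Y_j\in I:v$. Thus $I:v=(Y_1,\dots,Y_p)$, which has height $p$ and is therefore an associated prime of $S/I$, completing (1)$\Rightarrow$(2).

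The step I expect to require the most care is the construction in (1)$\Rightarrow$(2): verifying that the ad hoc map $w$ really meets the hypotheses of \Cref{thm:lcmgeneral} — particularly condition (a) when some $M_j$ is itself meet-irreducible — and then carrying out the colon computation without slips. The lattice-theoretic identities for $\bigwedge_j M_j$ are the linchpin; they follow cleanly from \Cref{lem:pseudoinverse}, but one must be mindful that $\psi$ is defined only on the semilattice $\bool p$ and not on $\sbool{p}$, so the meets involving $\hat 0$ have to be argued on $L$ rather than transported through $\psi$. The small cases $p\le 1$ must also be dispatched by hand.
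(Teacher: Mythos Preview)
Your proof is correct. Note that the paper itself \emph{omits} the proof of this proposition (it says ``As we do not use it, we omit the proof''), so there is no argument in the paper to compare against beyond the hint that it ``can be extracted from the proof of \Cref{prop:colon}''. Your (2)$\Rightarrow$(1) direction follows precisely this hint: you reuse the colon map $m\mapsto (m\vee v)/v$ from the proof of \Cref{prop:colon} and post-compose with the projection onto the variables of $\pp$; the observation that every $\phi(m)$ lies in $\pp$ (so the composite lands in $\bool{p}$ rather than $\sbool{p}$) and that each generator $X_j$ of $\pp$ already occurs in $\phi(L)$ are the right details.

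For (1)$\Rightarrow$(2) the paper gives no indication, and your constructive argument via \Cref{thm:lcmgeneral} is clean and natural: labeling the elements $M_j=\pinv{\psi}(c_j)$ with fresh variables $Y_j$, giving the remaining meet-irreducibles their own variables $X_Q$, and then coloning out $v=\prod_Q X_Q$ does exactly what is needed. The lattice identities $\bigwedge_j M_j=\hat 0$ and $\bigwedge_{k\neq j}M_k\neq\hat 0$ follow from \Cref{lem:pseudoinverse}(3) as you say, and together with atomisticity of $L$ they pin down $(I:v)=(Y_1,\dots,Y_p)$. The concerns you flag (condition~(a) of \Cref{thm:lcmgeneral} when some $M_j$ is itself meet-irreducible; the small cases $p\le 1$; that one must argue meets in $\overline{L}$ rather than transport them through $\psi$) are all handled correctly by your construction. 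One tiny point worth making explicit is that $v\notin I$ (since every $m_A$ has a $Y$-factor), so $(I:v)$ is indeed a proper ideal and hence genuinely an associated prime.
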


\subsection{Ideals generated in a single degree}
In this subsection we show that every finite atomistic lattice can be realized as lcm-lattice of a monomial ideal, whose minimal generators all have the same degree.
\begin{lemma}\label{lemma:singledegree}
	Let $L$ be a finite lattice and let $A \subset L$ be an antichain, i.e. a set of pairwise incomparable elements.
	Then there exists an lcm-closed set of monomials $G \subset \Mon{S}$ such that $L \cong \LCMl{G}$ and the monomials in $G$ corresponding to the elements of $A$ all have the same degree.
\end{lemma}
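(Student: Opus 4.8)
The plan is to apply the structural description of ideals with a given lcm-semilattice provided by \Cref{thm:lcmgeneral} together with \Cref{prop:realize}. Recall that \Cref{prop:realize} realizes $L$ as the lcm-semilattice of the ideal obtained from the map $w$ sending each meet-irreducible element $M$ to a dedicated variable $X_M$ and everything else to $1_\KK$; the corresponding generator of the element $N \in L$ then has degree equal to the number of meet-irreducible elements $M$ with $M \ngeq N$. The idea is to modify this $w$ so that the degrees of the generators indexed by the antichain $A$ are all equalized, while still satisfying conditions \eqref{aaa} and \eqref{bbb} of \Cref{thm:lcmgeneral}, so that the resulting ideal still has lcm-semilattice isomorphic to $L$.

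First I would compute, for each $N \in A$, the degree $d_N := \deg m_N$ of the generator from \Cref{prop:realize}, using formula \eqref{eq:lcmgen}: $d_N = \#\set{M \in L \with M \text{ meet-irreducible},\ M \ngeq N}$. Let $d := \max_{N \in A} d_N$. For each $N \in A$ I would introduce $d - d_N$ fresh variables, say $Y_{N,1}, \dotsc, Y_{N, d-d_N}$, all distinct across different $N$ and distinct from the $X_M$. The point is that each element $N$ of an antichain is meet-irreducible in the sense we need only if it happens to be covered by a unique element; but we do not need that. Instead, observe that for an antichain element $N$, the formula for $m_N$ picks up a contribution from $w(Q)$ for every $Q \ngeq N$, and in particular from $w(N')$ for every $N' \in A$ with $N' \neq N$ (since antichain elements are pairwise incomparable). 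So the natural move is: set $w(N) := X_N \cdot \prod_j Y_{N,j}$ for each $N \in A$ that is meet-irreducible, and more care is needed for $N \in A$ that are not meet-irreducible, since then \eqref{aaa} and \eqref{bbb} constrain $w(N)$ differently. The cleanest route is to keep $w$ as in \Cref{prop:realize} on all meet-irreducible elements \emph{not} in $A$, and to redefine $w$ on the antichain. Concretely: for $N \in A$ let $w(N) := \bigl(\prod_{j=1}^{d - d_N} Y_{N,j}\bigr) \cdot w_0(N)$ where $w_0$ is the map from \Cref{prop:realize}; for every other element keep $w_0$. Since the $Y_{N,j}$ are private to $N$, condition \eqref{aaa} (coprimality for incomparable elements) is preserved — any new variable appears in the value at exactly one element — and \eqref{bbb} (meet-irreducibles have $w \neq 1_\KK$, and $w(\hat 1_L) = 1_\KK$) is still satisfied since we only \emph{added} factors and did not touch $\hat 1_L$. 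Hence by \Cref{thm:lcmgeneral}(1) there is a set of monomials $G$ with $L \cong L_G$ realizing this $w$.

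It then remains to check that the generators indexed by the antichain all have degree $d$. By formula \eqref{eq:lcmgen} the generator $m_N$ for $N \in A$ has degree $\sum_{Q \ngeq N} \deg w(Q)$. Splitting this sum, the contribution of the old map $w_0$ is exactly $d_N$ (this is the degree in the realization of \Cref{prop:realize}, which is unchanged on elements outside $A$, and the $w_0$-values on elements of $A$ are accounted for inside $d_N$ as well, since $w_0(N') \ne 1$ only when $N'$ is meet-irreducible). The contribution of the new variables $Y_{N',j}$ is: for each $N' \in A$ with $N' \ngeq N$ — which, as $A$ is an antichain, means every $N' \in A \setminus \set{N}$ — we pick up $\deg\bigl(\prod_j Y_{N',j}\bigr) = d - d_{N'}$; and crucially $N$ itself does satisfy $N \geq N$, so the block $\prod_j Y_{N,j}$ is \emph{not} counted. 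So I must be slightly more careful: the naive construction gives $\deg m_N = d_N + \sum_{N' \in A,\ N' \neq N}(d - d_{N'})$, which is not constant. The fix is to not attach the padding variables to the $w$-value of $N$ itself but rather to arrange that each antichain element's generator misses exactly one padding block — which is automatic — and to choose the block sizes so the total is constant. Setting the block attached to $N'$ to have size $s_{N'}$, we need $d_N + \sum_{N' \ne N} s_{N'} = (\text{const})$ for all $N \in A$, i.e. $d_N - s_N = (\text{const})$, so take $s_N := d_N$, giving every antichain generator degree $\sum_{N' \in A} d_{N'}$. With this choice every step above goes through verbatim, and the main (mild) obstacle — which I would foreground in the writeup — is precisely this bookkeeping: an antichain element's own $w$-factors do \emph{not} contribute to its generator, so the padding must compensate for all the \emph{other} antichain elements, which reverses the naive sign and forces the choice $s_N = d_N$. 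A final remark: elements of $A$ that happen not to be meet-irreducible cause no trouble, since attaching private variables to $w(N)$ only strengthens $w(N) \ne 1_\KK$, which is harmless for conditions \eqref{aaa}–\eqref{bbb} (those only \emph{require} $w \ne 1$ at meet-irreducibles, they do not forbid it elsewhere).
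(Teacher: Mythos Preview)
Your proof is correct and follows essentially the same strategy as the paper's: start from a realization via \Cref{thm:lcmgeneral}/\Cref{prop:realize} and multiply the $w$-values at antichain elements by fresh private variables, exploiting that (by \eqref{eq:lcmgen} and incomparability) each $m_N$ for $N\in A$ picks up the padding of every other $N'\in A$ but not its own. The only difference is cosmetic---you solve for the padding sizes in one shot (taking $s_N=d_N$ so every $m_N$ has degree $\sum_{N'\in A} d_{N'}$), whereas the paper attaches one new variable to each current maximum-degree element and iterates until the gap closes.
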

\begin{proof}
	First, choose an admissible labeling $w_1: L \rightarrow \Mon{S}$, where $S$ is some polynomial ring with sufficiently many variables.
	If the monomials $m_a = \prod_{b \ngeq a} w_1(b)$ for $a \in A$ (as in \eqref{eq:lcmgen}) have all the same degree, then we are already done.
	Otherwise, let $a_1, a_2, \dotsc, a_r \in A$ be the elements whose monomials have the maximum degree among the monomials corresponding to $A$.
	We modify our $w_1$ by setting
	\[ w_2(m) :=
	\begin{cases}
		X_i w_1(m) &\text{ if } m = a_i,\\
		w_1(m) &\text{ otherwise.}
	\end{cases} \]
	where the $X_i$ are new variables.
	As $A$ is an antichain, it follows from \eqref{eq:lcmgen} that the degree of the monomials corresponding to $a_1, a_2, \dotsc, a_r \in A$ under $w_2$ increases by $r-1$, while the degree of all other monomials corresponding to $A$ increases by $r$.
	Hence after iterating this procedure finitely many times, all monomials corresponding to $A$ have the same degree.
\end{proof}
\begin{proposition}\label{prop:singledegree}
	Let $J \subsetIJ I \subset S$ be two monomial ideals.
	Then one can find monomial ideals $J' \subsetIJ I' \subset S'$ with $\LCMl{\LCMs{G(I) \cup G(J)}} \cong \LCMl{\LCMs{G(I') \cup G(J')}}$ where the isomorphism maps $L_J$ to $L_{J'}$,
	such that one's choice of the following holds:
	\begin{enumerate}
	\item Either $I'$ is generated in a single degree, or
	\item $J'$ is generated in a single degree.
	\end{enumerate}
\end{proposition}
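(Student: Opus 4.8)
The plan is to reduce the statement to \Cref{lemma:singledegree}, applied to the lcm-semilattice $L := L_{G(I)\cup G(J)}$ with a suitably chosen antichain: alternative~(1) will come from taking the antichain to be $G(I)$, and alternative~(2) from taking it to be $G(J)$. Note first that $G(I)$ is indeed an antichain in $L$ (ordered by divisibility), because no minimal generator of $I$ divides another; likewise $G(J)$ is an antichain in $L$. Moreover, by the definition of the lcm-semilattice, $G(I)\cup G(J)$ generates $L$ as a join-semilattice and $G(J)$ generates the subsemilattice $L_{G(J)}$. By symmetry it suffices to treat alternative~(1).

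Applying \Cref{lemma:singledegree} with the antichain $G(I)$ yields a polynomial ring $S'$, a finite set of monomials $G\subset S'$, and an isomorphism $\eta: L \to L_G$ of join-semilattices under which all monomials corresponding to the elements of $G(I)$ have the same degree. Set $G_I':=\eta(G(I))$, $G_J':=\eta(G(J))$, and $I':=(G_I')$, $J':=(G_J')$. Since $\eta$ preserves the divisibility order, $G_I'$ and $G_J'$ are antichains of pairwise non-dividing monomials, hence each is the (unique) minimal generating set of the ideal it generates; in particular $I'$ is minimally generated in the single degree $\deg\eta(h)$, $h\in G(I)$. As $\eta$ sends lcm's to lcm's, it identifies the lcm's of subsets of $G(I)\cup G(J)$ with those of $G_I'\cup G_J'$, so $L_{G_I'\cup G_J'}=L_G\cong L$, and this isomorphism restricts to an isomorphism $L_{G(J)}\to L_{G(J')}$. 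Finally, $J'\subsetneq I'$: for $g\in G(J)$ there is $h\in G(I)$ with $h\mid g$, i.e.\ $h\le g$ in $L$, whence $\eta(h)\mid\eta(g)$ and so $J'\subseteq I'$; and since $G(I)\not\subseteq G(J)$ (otherwise $I\subseteq J$) we may pick $h\in G(I)\setminus G(J)$, for which $\eta(h)\notin J'$ — for if $\eta(g)\in G_J'$ divided $\eta(h)$ then $g\mid h$ in $L$, and choosing $h_0\in G(I)$ with $h_0\mid g$ would force $h_0=h$ and then $g=h$, a contradiction. Thus $\eta(h)\in G_I'\setminus J'$ and $J'\subsetneq I'$.

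The ``in particular'' part follows at once: the isomorphism $\delta:=\eta$ is bijective and maps $L_{G(J)}$ onto $L_{G(J')}$, so \Cref{cor:lcm}(3) yields $\sdepth_S I/J-\depth_S I/J=\sdepth_{S'}I'/J'-\depth_{S'}I'/J'$; hence $I/J$ satisfies the Stanley conjecture if and only if $I'/J'$ does, and by construction one of $I'$, $J'$ is generated in a single degree. I do not expect a real obstacle here, since all the substance lies in \Cref{lemma:singledegree} and \Cref{cor:lcm}; the only points that need care are checking that the isomorphism respects the distinguished subsemilattice $L_{G(J)}$ and the strict inclusion $J\subsetneq I$, together with the observation that an antichain of monomials of a common degree is automatically a minimal generating set of the ideal it generates.
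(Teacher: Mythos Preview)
Your proof is correct and follows exactly the same approach as the paper: apply \Cref{lemma:singledegree} to the antichain $G(I)$ (resp.\ $G(J)$) inside $L_{G(I)\cup G(J)}$ and read off $I'$, $J'$ from the resulting isomorphism. You supply considerably more detail than the paper's terse two-sentence proof --- in particular the verification that $J'\subsetneq I'$, that the isomorphism carries $L_{G(J)}$ onto $L_{G(J')}$, and the explicit appeal to \Cref{cor:lcm}(3) for the ``in particular'' clause --- but the underlying argument is the same.
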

\begin{proof}
	The set of minimal generators of $I$ forms an antichain in $\LCMl{\LCMs{G(I) \cup G(J)}}$. Applying the foregoing Lemma \ref{lemma:singledegree} to it yields ideals $J' \subset I'$ with the same lcm-lattice, where $I'$ is generated in a single degree.
	On the other hand, applying the lemma to the antichain formed by the minimal generators of $J$ results in $J'$ being generated in a single degree.
\end{proof}

\begin{remark}
Note that our construction does in general not allow to assume that both ideals are generated in a single degree.
However, if $S=I$ (or more generally if one ideal is principal), then this is possible.
\end{remark}

\subsection{Further applications}\label{sec:future}
Finally, let us point out several results that also follow from our main result.
The following result was originally proven in Ishaq \cite{Is}. See also Seyed Fakhari \cite{SF} for a different proof.
\begin{proposition}
	Let $J \subsetneq I \subset S$ be two monomial ideals. Then
	\[ \sdepth_S \sqrt{I}/\sqrt{J} \geq \sdepth_S I/J \]
	and the same holds for the usual depth.
\end{proposition}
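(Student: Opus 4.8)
The plan is to realize the radical operation $m \mapsto \sqrt{m}$ (replacing a monomial by the squarefree monomial with the same support) as a surjective join-preserving map of lcm-semilattices and then quote \Cref{thm:lcmmap}. Since $\sqrt{I}$ and $\sqrt{J}$ live in the same polynomial ring $S = \KK[X_1,\dotsc,X_n]$, the asserted inequality $\sdepth_S \sqrt{I}/\sqrt{J} \geq \sdepth_S I/J$ is, via the identity $\sdepth_S(-) = n - \spdim_S(-)$, equivalent to $\spdim_S I/J \geq \spdim_S \sqrt{I}/\sqrt{J}$, which is exactly the conclusion produced by \Cref{thm:lcmmap}.

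First a degenerate case: one always has $\sqrt{J} \subseteq \sqrt{I}$, and if equality holds then $\sqrt{I}/\sqrt{J} = 0$ and there is nothing to prove (with the convention $\sdepth_S 0 = \infty$). So assume $\sqrt{J} \subsetneq \sqrt{I}$. Choose $G_I := G(I)$ and $G_J := G(J)$ to be minimal generating sets, and put $G_I' := \set{\sqrt{m} \with m \in G_I}$ and $G_J' := \set{\sqrt{m} \with m \in G_J}$. Since every monomial in $I$ is divisible by some element of $G(I)$, its radical is divisible by the corresponding element of $G_I'$; hence $G_I'$ generates $\sqrt{I}$, and likewise $G_J'$ generates $\sqrt{J}$. (These generating sets need not be minimal, but \Cref{thm:lcmmap} allows arbitrary generating sets.)

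Next I would verify that $\delta \colon m \mapsto \sqrt{m}$ restricts to a map $\slat_{G_I \cup G_J} \longrightarrow \slat_{G_I' \cup G_J'}$ with the required properties. The key observation is that the support of an lcm is the union of the supports, so $\sqrt{\lcm(m, m')} = \lcm(\sqrt{m}, \sqrt{m'})$; thus $\delta$ sends the lcm of a subset of $G_I \cup G_J$ to the lcm of the corresponding subset of $G_I' \cup G_J' = \delta(G_I \cup G_J)$. This simultaneously shows that $\delta$ maps $\slat_{G_I \cup G_J}$ into $\slat_{G_I' \cup G_J'}$, that it preserves the join, and that it is surjective, since every element of $\slat_{G_I' \cup G_J'}$ is an lcm of a subset of $\delta(G_I \cup G_J)$, hence the $\delta$-image of the corresponding lcm in $\slat_{G_I \cup G_J}$. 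The same reasoning restricted to subsets of $G_J$ shows that $\delta$ maps $\slat_{G_J}$ surjectively onto $\slat_{G_J'}$.

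With all hypotheses of \Cref{thm:lcmmap} in place, it yields $\spdim_S I/J \geq \spdim_S \sqrt{I}/\sqrt{J}$, which is the claim. There is essentially no hard step here: the whole content sits in \Cref{thm:lcmmap}, and the only points needing a moment's care are the degenerate case $\sqrt{J} = \sqrt{I}$ and the elementary bookkeeping that radicals of a generating set again generate the radical ideal. (The map $\delta$ is, of course, also the map of lcm-lattices noted in \cite{GPW}, so \Cref{thm:lcmdepth} gives the analogous inequality $\pdim_S I/J \geq \pdim_S \sqrt{I}/\sqrt{J}$ by the same argument.)
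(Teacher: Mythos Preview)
Your proof is correct and follows essentially the same route as the paper: define the radical map $m \mapsto \sqrt{m}$ on the union of generating sets, observe it preserves $\lcm$ and hence induces a surjective join-preserving map of lcm-semilattices respecting the sub-semilattice for $J$, and invoke \Cref{thm:lcmmap}. Your version is in fact slightly more careful than the paper's, as you explicitly treat the degenerate case $\sqrt{J} = \sqrt{I}$ (which can occur, e.g.\ $J=(x^2) \subsetneq I=(x)$) before applying the theorem.
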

\begin{proof}
\newcommand{\rad}[1]{\sqrt{#1}}
	For a monomial $m$ we write $\rad{m}$ for the product of the variables dividing $m$. Define $G'(I)$ as $\set{\rad{m} \with m \in G(I)}$ and $G'(J)$ similarly. Then $G'(I)$ and $G'(J)$ generate $\sqrt{I}$ and $\sqrt{J}$.
	Moreover, the map $m \mapsto \rad{m}$ gives rise to an \nice{} map $\LCMl{\LCMs{G(I) \cup G(J)}} \to \LCMl{\LCMs{G'(I) \cup G'(J)}}$. So the claim follows from Theorem \ref{thm:lcmmap} and Theorem \ref{thm:lcmdepth}.
\end{proof}

\begin{remark} There are several operations known on monomial ideals which do not change the lcm-lattice. In all these cases, the effect on depth and Stanley depth can be deduced from Theorem \ref{thm:lcmmap} and Theorem \ref{thm:lcmdepth}. We give references to several articles where these operations were previously studied:
	\begin{itemize}
		\item[--] Polarization. However, we used this result (cf.~\cite{IKM}) in our proof of Theorem \ref{thm:lcmmap}.
		\item[--] Multiplication of an ideal by a monomial \cite[Theorem 1.4]{Ci}.
		\item[--] The quotient modulo a non-zerodivisor monomial \cite[Theorem 1.1]{R2}, and the extension of the polynomial ring by new variables \cite[Prop. 5.1]{IJ}.
		\item[--] The constructions in Propositions 5.1 and 5.2 of \cite{IKM}, which themselves are extensions of other results (Ishaq and Qureshi \cite[Lemma 2.1]{IQ}, \cite[Lemma 1.1]{Ci2}, \cite[Lemma 2.3]{S}).
        \item[--] Corollary~3.3.~in Yanagawa~\cite{Y} and the properties treated in Anwar and Popescu \cite{AP}.
	\end{itemize}
\end{remark}

\section{The Stanley projective dimension as an invariant of lattices?} \label{sec:Stanley}
In view of our main result, the Stanley projective dimension of an ideal depends only on its lcm-lattice.
Hence one can interpret this number as a combinatorial invariant of the lattice itself.
Let us make this precise.
\newcommand{\I}{\mathrm{I}}
\newcommand{\Q}{\mathrm{Q}}
\begin{definition}\label{def:inv}
	Let $L$ be an finite atomistic lattice. Choose an ideal $I \subset S = \KK[X_1,\dotsc,X_n]$ such that $L_I \cong L$.
	We define
	\begin{enumerate}
	\item $\pdim_\I L \defa \pdim I$,
	\item $\pdim_\Q L \defa \pdim S/I$,
	\item $\spdim_\I L \defa \spdim I$ and
	\item $\spdim_\Q L \defa \spdim S/I.$
	\end{enumerate}
	Here, the subscripts $\Q$ and $\I$ stand for \qq{quotient} and \qq{ideal}. In particular, the subscript $\I$ is not the name of the ideal $I$ involved in the definition.
\end{definition}

Note that it trivially holds that $\pdim_\I L = \pdim_\Q L - 1$ and that these invariants may depend on the underlying field $\KK$.
On the other hand, $\spdim_\I L$ and $\spdim_\Q L$ clearly do not depend on the field (cf. \cite[Remark 3.5]{IKM4}).
Now, \cite[Conjecture 2]{A1},\cite[Conjecture 1]{A2} and \cite[Conjecture 64]{H} can be formulated for lattices:
\begin{conjecture}\label{conj:stanleylattice}
	For all finite atomistic lattices $L$, it holds that
	\begin{enumerate}
		\item $\spdim_\I L \leq \pdim_\I L$,
		\item $\spdim_\Q L \leq \pdim_\Q L$, and
		\item $\spdim_\I L \leq \spdim_\Q L - 1$.
	\end{enumerate}
\end{conjecture}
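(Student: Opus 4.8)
The plan is to fix a combinatorially convenient representative ideal for $L$, reduce the three inequalities to two of them, and attack those by a descending induction on the number of atoms. By \Cref{cor:lcm} together with \Cref{thm:lcmmap} and \Cref{thm:lcmdepth}, each of $\pdim_i L$ and $\spdim_i L$ depends only on the isomorphism type of $L$, so for a given $L$ we may choose any monomial ideal $I$ with $L_I \cong L$; by \Cref{prop:realize} and \Cref{prop:singledegree} we may in addition take $I$ squarefree and generated in a single degree, and we write $k$ for its number of minimal generators, which equals the number of atoms of $L$. Observe next that (1) is a consequence of (2) and (3): in terms of the Stanley depth, (3) reads $\sdepth_S I \ge \sdepth_S S/I + 1$, and combining it with (2), i.e.\ $\sdepth_S S/I \ge \depth_S S/I$, and the identity $\depth_S S/I = \depth_S I - 1$ gives $\sdepth_S I \ge \depth_S I$, which is (1). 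So it remains to establish (2) and (3).

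For (2) I would induct downwards on $k$ along the chain of elementary collapses produced by \Cref{lem:free} and \Cref{lem:factor}: the surjection $\bool{k} \twoheadrightarrow L$ factors, as long as it is not injective, through some $\bool{k}/\!\sim_a$ at a meet-irreducible $a$, and iterating shows that every $L$ on $k$ atoms is obtained from $\bool{k}$ by finitely many steps $L' \mapsto L'/\!\sim_a$. The base case $L \cong \bool{k}$ is the complete-intersection case, where \Cref{prop:boundI} gives $\pdim_2 L = \spdim_2 L = k$; the theorem following \Cref{prop:boundI} moreover settles the cases $\pdim_2 L \in \{k-1, k\}$. By \Cref{thm:lcmmap} and \Cref{thm:lcmdepth} an elementary collapse can only decrease both $\spdim_2$ and $\pdim_2$, so what one needs is that it cannot turn a valid inequality $\spdim_2 L' \le \pdim_2 L'$ into a violated inequality $\spdim_2(L'/\!\sim_a) > \pdim_2(L'/\!\sim_a)$; granting this, (2) follows from the base case.

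For (3), which does not formally follow from (1) and (2), the natural tool is the short exact sequence $0 \to I \to S \to S/I \to 0$ together with the behaviour of Stanley decompositions along it: starting from a Stanley decomposition of $S/I$ of depth $d$, one would try to build a decomposition of $I$ of depth at least $d+1$, for instance by relating both to a Stanley decomposition of $S$. Unlike for the ordinary projective dimension, where $\pdim_1 L = \pdim_2 L - 1$ holds identically, here only the inequality is expected, which is precisely what keeps (3) a genuine conjecture rather than an identity.

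The step I expect to be the main obstacle is the inductive step for (2): there is at present no way to control the quantity $\pdim_2 - \spdim_2$ under an elementary quotient $L'/\!\sim_a$, and supplying such control uniformly would amount to proving the Stanley conjecture itself, so the argument above does not close. What it does yield is a \emph{finite} reduction: for each fixed $k$ there are only finitely many lcm-semilattices of ideals with $k$ minimal generators, so verifying all three inequalities for every such ideal is a finite computation, which we carry out for $k \le 5$ in \cite{IKM2}.
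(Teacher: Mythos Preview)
The statement you are attempting to prove is labeled a \emph{Conjecture} in the paper, not a theorem, and the paper gives no proof of it. It is simply a reformulation in lattice-theoretic language of the Stanley conjecture (parts (1) and (2)) and of Herzog's conjecture $\sdepth I \ge \sdepth S/I + 1$ (part (3)), both of which were open at the time of writing. So there is no ``paper's own proof'' to compare your proposal against.

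Your proposal is honest about this: you correctly observe that the inductive step for (2) would require controlling $\pdim_2 - \spdim_2$ under an elementary collapse, which is exactly the content of the conjecture, and that (3) is a genuine open problem not derivable from (1) and (2). In other words, your write-up is not a proof and you say so yourself. The reduction of (1) to (2) and (3) is correct and is a standard observation. The closing remark about a finite check for each fixed $k$ is also correct and is precisely what the paper states in its last section (and carries out for $k \le 5$ in \cite{IKM2}). But none of this amounts to a proof of \Cref{conj:stanleylattice}, nor does the paper claim one.
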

After a first version of this paper was posted on the arXiv, Duval, Goeckner, Klivans, and Martin \cite{counterexample} found a counterexample to the Stanley conjecture, which also disproves part (2) of this conjecture
\footnote{We refrain from deleting part (2) of this conjecture, because it is cited in the already published article \cite{IKM3}.}.
Parts (1) and (3) are still open.
\begin{remark}
Part (1) of Conjecture \ref{conj:stanleylattice} depends implicitly on the underlying field $\KK$.
However, as it follows from Hochster's formula \cite[Corollary 5.12]{millersturm} that $\pdim S_\ZZ/I \otimes \mathbb{Q} \leq \pdim S_\ZZ/I \otimes \ZZ/p\ZZ$ for any monomial ideal $I \subset S_\ZZ := \ZZ[X_1,\dotsc,X_n]$ over the integers and any prime $p$, the characteristic $0$ case of the conjecture implies it for all fields.
\end{remark}

A natural question is how these new invariants relate to the usual invariants of lattices. Proposition \ref{prop:boundI} can be interpreted in this way,
where the number of generators $k$ corresponds to the width of the subposet of join-irreducible elements of $L$.
Another step in this direction was taken by the second author together with S.A. Seyed Fakhari in \cite{KSF}, where we show that $\spdim_\I L$ and $\spdim_\Q L$ are bounded above by the length and by the order dimension of $L$.

One problem with the second part of Definition \ref{def:inv} is that one has to choose an ideal $I$.
Remark that the invariants $\pdim_\I$ and $\pdim_\Q$ may be computed directly from $L$: It follows from the results in \cite{GPW} that $\pdim_\Q L = \pdim_\I L + 1 = \min\set{i \with \tilde{H}_i(\Delta((\hat{0}, m)_L); \KK) \neq 0,\ m \in L} + 2$, where $\Delta((\hat{0}, m)_L)$ denotes the order complex of the open interval $(\hat{0}, m)_L  \subset L$.
Motivated by this we pose the following question:
\begin{question}
Is there a purely lattice theoretic description of $\spdim_\I L$ and $\spdim_\Q L$?
\end{question}

\section*{Acknowledgments}

The authors are greatly indebted to Volkmar Welker for pointing out the guidelines leading to Theorem \ref{thm:lcmdepth}. We also wish to thank Winfried Bruns and the anonymous reviewers for several helpful suggestions.

\bibliographystyle{alpha}
\bibliography{LCM_Final}

\newcommand{\etalchar}[1]{$^{#1}$}
\begin{thebibliography}{DGKM16}

\bibitem[Ape03a]{A1}
J.~Apel.
\newblock On a conjecture of {R.P. Stanley}; {P}art {I}--monomial ideals.
\newblock {\em Journal of Algebraic Combinatorics}, 17(1):39--56, 2003.

\bibitem[Ape03b]{A2}
J.~Apel.
\newblock On a conjecture of {R.P. Stanley}; {P}art {II}--quotients modulo
  monomial ideals.
\newblock {\em Journal of Algebraic Combinatorics}, 17(1):57--74, 2003.

\bibitem[BHK{\etalchar{+}}10]{BHKTY}
C.~Bir{\'o}, D.~M Howard, M.~T. Keller, W.~T. Trotter, and S.~J. Young.
\newblock Interval partitions and {S}tanley depth.
\newblock {\em Journal of Combinatorial Theory, Series A}, 117(4):475--482,
  2010.

\bibitem[BPS98]{BPS}
D.~Bayer, I.~Peeva, and B.~Sturmfels.
\newblock Monomial resolutions.
\newblock {\em Mathematical Research Letters}, 5:31--46, 1998.

\bibitem[Cim08]{Ci2}
M.~Cimpoea{\c{s}}.
\newblock Stanley depth of complete intersection monomial ideals.
\newblock {\em Bull. Math. Soc. Sci. Math. Roumanie}, 51(99):205--211, 2008.

\bibitem[Cim09]{Ci}
M.~Cimpoea{\c{s}}.
\newblock Stanley depth of monomial ideals with small number of generators.
\newblock {\em Central European Journal of Mathematics}, 7(4):629--634, 2009.

\bibitem[DGKM16]{counterexample}
A.~M. Duval, B.~Goeckner, C.~J. Klivans, and J.~L. Martin.
\newblock A non-partitionable {C}ohen{\textendash}{M}acaulay simplicial
  complex.
\newblock {\em Advances in Mathematics}, 299:381--395, 2016.

\bibitem[DP02]{DP}
B.~A. Davey and H.A. Priestley.
\newblock {\em Introduction to {L}attices and {O}rder}.
\newblock Cambridge University Press, 2002.

\bibitem[Gar80]{G}
A.M. Garsia.
\newblock Combinatorial methods in the theory of {C}ohen-{M}acaulay rings.
\newblock {\em Advances in Mathematics}, 38(3):229--266, 1980.

\bibitem[GPW99]{GPW}
V.~Gasharov, I.~Peeva, and V.~Welker.
\newblock The lcm-lattice in monomial resolutions.
\newblock {\em Mathematical Research Letters}, 6:521--532, 1999.

\bibitem[Her13]{H}
J.~Herzog.
\newblock A survey on {S}tanley depth.
\newblock In {\em Monomial Ideals, Computations and Applications}, pages 3--45.
  Springer, 2013.

\bibitem[HJZ10]{HJZ}
J.~Herzog, A.~S. Jahan, and X.~Zheng.
\newblock Skeletons of monomial ideals.
\newblock {\em Mathematische Nachrichten}, 283(10):1403--1408, 2010.

\bibitem[HP06]{HP}
J.~Herzog and D.~Popescu.
\newblock Finite filtrations of modules and shellable multicomplexes.
\newblock {\em manuscripta mathematica}, 121(3):385--410, 2006.

\bibitem[HVZ09]{HVZ}
J.~Herzog, M.~Vladoiu, and X.~Zheng.
\newblock How to compute the {S}tanley depth of a monomial ideal.
\newblock {\em Journal of Algebra}, 322(9):3151 -- 3169, 2009.

\bibitem[IKMF15]{IKM}
B.~Ichim, L.~Katth{\"a}n, and J.~J. Moyano-Fern{\'a}ndez.
\newblock The behavior of {S}tanley depth under polarization.
\newblock {\em Journal of Combinatorial Theory, Series A}, 135:332--347, 2015.

\bibitem[IKMF16]{IKM3}
B.~Ichim, L.~Katth{\"a}n, and J.~J. Moyano-Fern{\'a}ndez.
\newblock {LCM} lattices and {S}tanley depth: A first computational approach.
\newblock {\em Experimental Mathematics}, 25(1):46--53, 2016.

\bibitem[IKMF17]{IKM4}
Bogdan Ichim, Lukas Katth{\"a}n, and Julio~Jos{\'e} Moyano~Fern{\'a}ndez.
\newblock How to compute the {S}tanley depth of a module.
\newblock {\em Math. Comp.}, 86:455--472, 2017.

\bibitem[IMF14]{IJ}
B.~Ichim and J.~J. Moyano-Fern{\'a}ndez.
\newblock How to compute the multigraded {H}ilbert depth of a module.
\newblock {\em Mathematische Nachrichten}, 287(11--12):1274 -- 1287, 2014.

\bibitem[IQ13]{IQ}
M.~Ishaq and M.I. Qureshi.
\newblock Upper and lower bounds for the {S}tanley depth of certain classes of
  monomial ideals and their residue class rings.
\newblock {\em Communications in Algebra}, 41:1107--1116, 2013.

\bibitem[Ish12]{Is}
M.~Ishaq.
\newblock Upper bounds for the {S}tanley depth.
\newblock {\em Communications in Algebra}, 40(1):87--97, 2012.

\bibitem[Kat15]{K}
L.~Katth{\"a}n.
\newblock Stanley depth and simplicial spanning trees.
\newblock {\em Journal of Algebraic Combinatorics}, 42(2):507--536, 2015.

\bibitem[KSF15]{KSF}
L.~Katth{\"a}n and S.~A. Seyed~Fakhari.
\newblock Two lower bounds for the {S}tanley depth of monomial ideals.
\newblock {\em Mathematische Nachrichten}, 288(11--12):1360--1370, 2015.

\bibitem[KY09]{KY}
M.~T. Keller and S.~J. Young.
\newblock Stanley depth of squarefree monomial ideals.
\newblock {\em Journal of Algebra}, 322(10):3789 -- 3792, 2009.

\bibitem[Map13]{M}
S.~Mapes.
\newblock Finite atomic lattices and resolutions of monomial ideals.
\newblock {\em Journal of Algebra}, 379:259 -- 276, 2013.

\bibitem[MS05]{millersturm}
E.~Miller and B.~Sturmfels.
\newblock {\em Combinatorial commutative algebra}.
\newblock Springer, 2005.

\bibitem[MS07]{Murdock2007}
J.~Murdock and J.~Sanders.
\newblock A new transvectant algorithm for nilpotent normal forms.
\newblock {\em Journal of Differential Equations}, 238(1):234--256, 2007.

\bibitem[MSY00]{MSY}
E.~Miller, B.~Sturmfels, and K.~Yanagawa.
\newblock Generic and cogeneric monomial ideals.
\newblock {\em Journal of Symbolic Computation}, 29(4):691--708, 2000.

\bibitem[Mur02]{Murdock2002}
J.~Murdock.
\newblock On the structure of nilpotent normal form modules.
\newblock {\em Journal of Differential Equations}, 180(1):198--237, 2002.

\bibitem[Oka11]{O}
R.~Okazaki.
\newblock A lower bound of {S}tanley depth of monomial ideals.
\newblock {\em J. Commut. Algebra}, 3(1):83--88, 2011.

\bibitem[OW07]{OW}
P.~Orlik and V.~Welker.
\newblock {\em Algebraic Combinatorics}.
\newblock Springer, 2007.

\bibitem[OY11]{OY}
R.~Okazaki and K.~Yanagawa.
\newblock Alexander duality and stanley depth of multigraded modules.
\newblock {\em Journal of Algebra}, 340(1):35--52, 2011.

\bibitem[Pop14]{AP}
A.~Popescu.
\newblock Depth and {S}tanley depth of the canonical form of a factor of
  monomial ideals.
\newblock {\em Bull. Math. Soc. Sci. Math. Roumanie}, 57(105):207--216, 2014.

\bibitem[Pop15]{AP2}
A.~Popescu.
\newblock An algorithm to compute the {H}ilbert depth.
\newblock {\em Journal of Symbolic Computation}, 66:1--7, 2015.

\bibitem[Rau07]{R2}
A.~Rauf.
\newblock Stanley decompositions, pretty clean filtrations and reductions
  modulo regular elements.
\newblock {\em Bull. Math. Soc. Sc. Math. Roumanie}, 50(98):347--354, 2007.

\bibitem[San07]{Sanders2007}
J.~Sanders.
\newblock Stanley decomposition of the joint covariants of three quadratics.
\newblock {\em Regular and Chaotic Dynamics}, 12(6):732--735, 2007.

\bibitem[SF17]{SF}
S.~A. Seyed~Fakhari.
\newblock Stanley depth and symbolic powers of monomial ideals.
\newblock {\em Math. Scand.}, 120(1):5--16, 2017.

\bibitem[She09]{S}
Y.~H. Shen.
\newblock Stanley depth of complete intersection monomial ideals and
  upper-discrete partitions.
\newblock {\em Journal of Algebra}, 321(4):1285 -- 1292, 2009.

\bibitem[Sta79]{stanley79}
R.~P. Stanley.
\newblock Balanced {C}ohen-{M}acaulay complexes.
\newblock {\em Transactions of the American Mathematical Society},
  249(1):139--157, 1979.

\bibitem[Sta82]{St}
R.~P. Stanley.
\newblock Linear {D}iophantine equations and local cohomology.
\newblock {\em Invent. Math.}, 68:175--193, 1982.

\bibitem[SW91]{Sturmfels1991}
B.~Sturmfels and N.~White.
\newblock Computing combinatorial decompositions of rings.
\newblock {\em Combinatorica}, 11(3):275--293, 1991.

\bibitem[Uli10]{Ul}
J.~Uliczka.
\newblock Remarks on {H}ilbert series of graded modules over polynomial rings.
\newblock {\em Manuscripta Math.}, 132:159--168, 2010.

\bibitem[Yan12]{Y}
K.~Yanagawa.
\newblock Sliding functor and polarization functor for multigraded modules.
\newblock {\em Communications in Algebra}, 40(3):1151--1166, 2012.

\end{thebibliography}

\end{document}